\newcommand{\duline}[1]{{#1}}
\newcommand{\real}{\mathbb{R}} 
\newcommand{\pinteger}{\mathscr{P}} 
\newcommand{\parts}[1]{\mathbf{\duline{2}}^{#1}} 
\newcommand{\varifolds}{\mathbf{V}} 
\newcommand{\ivarifolds}{\mathbf{IV}} 
\newcommand{\restrict}{\mathop{\llcorner}} 
\newcommand{\setvarifold}[2]{\mathbf{\duline{\nu}}(#1,#2)} 
\newcommand{\graphvarifold}[1]{\mathbf{\duline{\nu}}(#1)} 
\newcommand{\qspace}[2]{\mathbf{\duline{Q}}{}_{#1}(#2)} 
\newcommand{\smoothfunctions}[2]{\mathcal{E}(#1,#2)} 
\newcommand{\smoothcompact}[2]{\mathcal{D}(#1,#2)} 
\newcommand{\compactfunctions}[2]{\mathcal{K}(#1,#2)} 
\newcommand{\vweakfunctions}[2]{\mathcal{T}(#1,#2)} 
\newcommand{\domain}[1]{\text{dmn }#1} 
\newcommand{\weakd}{\mathbf{D}} 
\newcommand{\vweakd}[2]{{#1}\,\weakd{#2}} 
\newcommand{\preimage}[2]{{#1}^{-1}[#2]} 
\newcommand{\notpreimage}[2]{\domain{#1}\setminus{{#1}^{-1}[#2]}} 
\newcommand{\graph}[1]{\textup{graph}(#1)} 
\newcommand{\lip}[1]{\textup{Lip}(#1)} 
\newcommand{\affineap}[1]{\textup{ap}\,A{#1}} 
\newcommand{\openball}[2]{\mathbf{\duline{U}}\left(#1,#2\right)} 
\newcommand{\closedball}[2]{\mathbf{\duline{B}}\left(#1,#2\right)} 
\newcommand{\cbshort}[1]{\mathbf{\duline{B}}\left(#1\right)} 
\newcommand{\grassmannian}[2]{\mathbf{\duline{G}}(#1,#2)} 
\newcommand{\planeopenball}[3]{\mathbf{\duline{U}}\left(#1;#2,#3\right)} 
\newcommand{\planeclosedball}[3]{\mathbf{\duline{B}}\left(#1;#2,#3\right)} 
\newcommand{\tcylinder}[4]{\mathbf{\duline{C}}\left(#1;#2,#3,#4\right)} 
\newcommand{\tcylshort}[2]{\mathbf{\duline{C}}\left(#1,#2\right)} 
\newcommand{\density}[3]{\Theta^{#1}(#2,#3)} 
\newcommand{\gmeancurvature}{\mathbf{\duline{h}}} 
\newcommand{\gsff}[1]{\mathcal{C}_{#1}} 
\newcommand{\totalcurvature}[3]{(\mass{#1}\restrict{#2})_{(#3)}(|\gmeancurvature(#1)|)} 
\newcommand{\smcurvature}[4]{{#3}^\mq\totalcurvature{#1}{\closedball{#2}{#3}}{#4}} 
\newcommand{\tangentmap}[3]{\textup{Tan}^{#1}(#2,#3)} 
\newcommand{\tmap}[1]{\tau_{#1}} 
\newcommand{\mq}{\alpha} 
\newcommand{\econstant}{\Lambda} 
\newcommand{\unitmeasure}[1]{\boldsymbol{\omega}_{#1}} 
\newcommand{\hausdorff}{\mathcal{H}} 
\newcommand{\mass}[1]{\|#1\|} 
\newcommand{\vboundary}[2]{{#1}\,\partial{#2}} 
\newcommand{\support}[1]{\textup{supp}\,#1} 
\newcommand{\lebesgue}[1]{\mathcal{L}^{#1}} 
\newcommand{\dimension}[1]{\textup{dim }#1} 
\newcommand{\integrald}{\textup{d}} 
\newcommand{\integralvx}[2]{\textup{d}\mass{#1}_{#2}} 
\newcommand{\innerproduct}[2]{\langle\, #1,#2\,\rangle} 
\newcommand{\cardinality}[1]{\textup{card }#1} 
\newcommand{\sing}{\textup{sing}} 
\newcommand{\homom}[2]{\mathcal{L}(#1,#2)} 
\newcommand{\trace}[1]{\textup{tr}_{#1}} 
\newcommand{\distance}{\textup{dist}} 
\newcommand{\diam}{\textup{diam}} 
\newcommand{\openballY}[3]{\mathbf{U}^{#1}(#2,#3)} 
\newcommand{\closedballY}[3]{\mathbf{B}^{#1}(#2,#3)} 
\@date \else {\vskip3ex \centering\footnotesize\@date\par\vskip1ex}\fi
\else \@footnotetext{\@setdate}\fi}
\newtheorem{theorem}{Theorem}[section]
\newtheorem{lemma}[theorem]{Lemma}
\newtheorem{remark}[theorem]{Remark}
\newtheorem{corollary}[theorem]{Corollary}
\newtheorem{definition}[theorem]{Definition}
\newtheorem{hypothesis}[theorem]{Hypothesis}
\newtheorem*{theorem*}{Theorem}
\newtheorem*{corollary*}{Corollary}
\begin{document}
\title[$C^{1,\alpha}$ Multiple Valued Representation]{A Corrected Proof of the Graphical Representation of a Class of Curvature Varifolds by $C^{1,\alpha}$ Multiple Valued Functions}
\author{Nicolau S. Aiex}
\date{\today}
\address{88, Sec.4, Tingzhou Road, SE Building SE809, Taipei, 116059, Taiwan}
\email{nsarquis@math.ntnu.edu.tw}
\maketitle

\begin{abstract}
We provide a counter-example to Hutchinson's original proof of $C^{1,\alpha}$ representation of curvature $m$-varifolds with $L^{q}$-integrable second fundamental form and $q>m$ in \cite{hutchinson1986.2}.
We also provide an alternative proof of the same result and introduce a method of decomposing varifolds into nested components preserving weakly differentiability of a given function.
Furthermore, we prove the structure theorem for curvature varifolds with null second fundamental form which is widely used in the literature.
\end{abstract}

\section{Introduction}

The goal of this article is to address and fix an error in the proof of $C^{1,\alpha}$ multiple valued regularity for curvature $m$-varifolds with $L^q$-integrable ($q>m$) second fundamental form  in \cite{hutchinson1986.2}.
Even though the conclusions therein are correct, there is a minor mistake in the arguments of \cite{hutchinson1986.2}*{Lemma 3.3} to which we provide a counter-example in the Appendix and a substitute result in the form of Corollary \ref{partition corollary}.
Not only does our counter-example creates the need of an alternative proof, but also it indicates how the original argument fails to fully utilize all the necessary hypotheses.
We further point out that the structure theorem \cite{hutchinson1986.2}*{Corollary to Theorem 3.4} as stated is also incorrect, besides, the proof is depedent on \cite{hutchinson1986.2}*{Lemma 3.3}.
We fix the statement and give an alternative proof as well as proving the uniqueness of the tangent cone \cite{hutchinson1986.2}*{Theorem 3.4}.

We would like to further point out that there are many results reliant on the conclusions of \cite{hutchinson1986.2}, particularly on the characterization of curvature varifolds with null second fundamental form.
To cite a few we mention \cites{bellettini-mugnai2004,wickramasekera2005,schatzle2010,mugnai2013,mondino2014,buet-leonardi-masnou2022}.

Besides the main regularity result, the main contribution of our alternative proof is the explicit construction of a decomposition of curvature varifolds that preserves weakly differentiability of a given function and satisfies a nesting property for arbitrarily small scales.
The components of the varifold may not be indecomposable in the sense of \cite{menne2016.1}*{\textsection 6}, that is, the support of each component might have disconnected pieces.
This is the drawback to ensure that the function remains weakly differentiable with respect to each component.
In particular, given a curvature varifold we are able to construct a partition in which each element is also a curvature varifold, though they may be decomposable.
This is given by Theorem \ref{partition theorem} in which the theory of weak differentiable functions developed in \cite{menne2016.1}, in particular the Sobolev-Poincar\'e inequality \cite{menne2016.1}*{10.7}, play an essential role.
We further prove the uniqueness of tangent cone in Lemma \ref{unique tangent cone} and the structure theorem for curvature varifolds with null second fundamental form in Theorem \ref{structure theorem}.

The principal issue of \cite{hutchinson1986.2}*{Lemma 3.3} is to prove that the tangent map of the varifold accummulates on finitely many planes at sufficiently small scale.
Our example in the appendix suggests that the argument should rely strongly on the integrability condition.
We construct a sequence of partitions at sufficiently small scales keeping track of the derived subcomponents.
Even though the number of components of each partition may increase, the Sobolev-Poincar\'e inequality for weakly differentiable functions on varifolds guarantees that the number of components that intersect the half-scale will eventually be constant.

\begin{figure}[H]
\centering
\begin{subfigure}{.5\textwidth}
  \centering
  \begin{tikzpicture}[scale=0.25]
\draw  (0,0) node (v1) {} circle (10);
\draw[blue,very thick,dotted] (-10,0) -- (10,0);

\draw[red,very thick,dotted] (-9.5,3) .. controls (-7,5) and (-6,5.5) .. (-4.5,4.5) .. controls (-2,2.5) and (-2,0) .. (0,0) .. controls (2,0) and (7.5,5.5) .. (6,7) .. controls (5,8) and (3.5,6) .. (2,5) .. controls (0.5,4) and (-1.5,4) .. (-3,5.5) .. controls (-4.5,7) and (-4.5,8) .. (-6,8);

\begin{scope}
\clip (0,0) circle (5);
\draw[blue,very thick] (-10,0) -- (10,0);
\draw[red,very thick] (-9.5,3) .. controls (-7,5) and (-6,5.5) .. (-4.5,4.5) .. controls (-2,2.5) and (-2,0) .. (0,0) .. controls (2,0) and (7.5,5.5) .. (6,7) .. controls (5,8) and (3.5,6) .. (2,5) .. controls (0.5,4) and (-1.5,4) .. (-3,5.5) .. controls (-4.5,7) and (-4.5,8) .. (-6,8);
\end{scope}

\draw[dashed]  (v1) circle (5);

\end{tikzpicture}
  \caption*{Partition at scale $R_1$ and $\frac{R_1}{2}$.}
  \label{fig:sub1}
\end{subfigure}%
\begin{subfigure}{.5\textwidth}
  \centering
  \begin{tikzpicture}[scale=0.25]
\draw  (0,0) node (v1) {} circle (10);
\draw[blue,very thick,dotted] (-10,0) -- (10,0);

\draw[red,very thick,dotted] (-9.5,3) .. controls (-7,5) and (-6,5.5) .. (-4.5,4.5) .. controls (-2,2.5) and (-2,0) .. (0,0) .. controls (2,0) and (7.5,5.5) .. (6,7) .. controls (5,8) and (3.5,6) .. (2,5) .. controls (0.5,4) and (-1.5,4) .. (-3,5.5) .. controls (-4.5,7) and (-4.5,8) .. (-6,8);

\begin{scope}
\clip (0,0) circle (4);
\draw[blue,very thick] (-10,0) -- (10,0);
\draw[red,very thick] (-9.5,3) .. controls (-7,5) and (-6,5.5) .. (-4.5,4.5) .. controls (-2,2.5) and (-2,0) .. (0,0) .. controls (2,0) and (7.5,5.5) .. (6,7) .. controls (5,8) and (3.5,6) .. (2,5) .. controls (0.5,4) and (-1.5,4) .. (-3,5.5) .. controls (-4.5,7) and (-4.5,8) .. (-6,8);
\end{scope}

\draw[dashed]  (v1) circle (4);

\end{tikzpicture}
  \caption*{Partition at scale $R_2<R_1$ and $\frac{R_2}{2}$.}
  \label{fig:sub2}
\end{subfigure}
\label{fig:test}
\end{figure}

After constructing the appropriate decomposition, we proceed to prove the Lipschitz approximation of each individual component.
The integrability condition on the second fundamental form is sufficiently strong to obtain excess estimates via the nesting property of the decomposition and the Sobolev-Poincar\'e inequality.

Once we obtain the Lipschitz approximation by $\mathbf{Q}$-valued function then Hutchinson's proof of $C^{1,\alpha}$ regularity in \cite{hutchinson1986.2}*{3.8,3.9} remains valid since we prove all the auxiliary results derived from \cite{hutchinson1986.2}*{Lemma 3.3} using our alternative conclusion.
We have verified Hutchinson's proof of the H\"older regularity and decided to not include it here since we have nothing new to contribute to that part of the result.

This article is divided as follows. In section 2 we introduce the notion of partition of a varifold and prove all the necessary consequences of the monotocity formula in our setting.
In section 3 we construct the nested partition of a varifold that preserves a weakly differentiable function.
In section 4 we prove the structure theorem Theorem \ref{structure theorem}, the uniqueness of tangent cones Lemma \ref{unique tangent cone} and compute all the estimates to apply a general Lipschitz approximation theorem.
The appendix is dedicated to a counter-example that justifies an alternative proof of Hutchinson's result.

\textbf{
Acknowledgements:} We would like to thank professor Ulrich Menne for several discussions and support inthe development of this project.
The author was funded by NSTC grant 112-2811-M-003-002.

\section{Preliminaries}

\noindent\textbf{Notation.} 
We denote $\openball{x}{r}=\{y\in\real^n:|y-x|<r\}$ and $\closedball{x}{r}$ its closure.
Whenever $U\subset\real^n$ is an open set, $m,n\in\pinteger$ with $m\leq n$, $V\in\varifolds_m(U)$ and $A\subset U$ we denote $(V\restrict A)(B)=V(B\cap A\times\grassmannian{n}{m})$.
Given a finite dimensional vector field $Y$ we denote by $\vweakfunctions{V}{Y}$ the space of generalised $V$-weakly differentiable functions (see \cite{menne2016.1}*{Definition 8.3}).
If $\openball{a}{s}\subset U$ and $f\in\vweakfunctions{V}{Y}$ then we write $V_s=V|{\parts{\openball{a}{s}\times\grassmannian{n}{m}}\in\varifolds_m(\openball{a}{s})}$ and $f_s=f|{\openball{a}{s}}\in\vweakfunctions{V_s}{Y}$.
Given $q,\delta\in\real$ with $q>m$ and $\delta>0$, we define $\mq=1-\frac{m}{q}$and $\econstant=2\unitmeasure{m}^{-\frac{1}{q}}\mq^{-1}$, where $\unitmeasure{m}$ is the volume of the unit $m$-ball.
When $R\subset\real^n$ is a $\hausdorff^m$-rectifiable set and $\theta$ is a $\hausdorff^m$-measurable $\real$-valued positive function defined on $R$, we write $\setvarifold{R}{\theta}$ for the corresponding induced rectifiable $m$-varifold.

\begin{hypothesis}\label{basic hypothesis}
Suppose $m,n\in\pinteger$, $m\leq n$, $U\subset\real^n$ is an open set, $V\in\varifolds_m(U)$, $\mass{\delta V}$ is a Radon measure and $\gmeancurvature(V,\cdot)$ is a $\mass{V}$-measurable $\real^n$-valued function in $U$ satisfy:
\begin{enumerate}[(i)]
\item $\density{m}{\mass{V}}{x}\geq 1$ for $\mass{V}$-almost every $x\in U$ and
\item For all $g\in\compactfunctions{U}{\real^n}$, $\delta V(g)=-\int_U\innerproduct{\gmeancurvature(V,x)}{g(x)}\integralvx{V}{x}$,
\end{enumerate}
where $\compactfunctions{U}{\real^n}$ denotes the space of compactly supported continuous functions.
\end{hypothesis}

It is easy to see that if $V$ satisfies the above hypothesis, $a\in\support{\mass{V}}$ and $r>0$ with $\openball{a}{r}\subset U$ then $V_r$ also satisfies the same conditions.
We also remark that condition $(ii)$ is equivalent to $\mass{\delta V}_{\textup{sing}}=0$ (see \cite{allard1972}*{4.3(5)}).

\begin{definition}[\cite{menne-scharrer2022:arxiv}*{Definition 5.7}]\label{partition definition}
Suppose $m,n\in\pinteger$, $m\leq n$, $U\subset\real^n$ is an open set, $V\in\varifolds_m(U)$ and $\mass{\delta V}$ is a Radon measure.
We say that $\Pi\subset\varifolds_m(U)$ is a partition of $V$ in $U$ if:
\begin{enumerate}
\item For each $W\in\Pi$ there exists a $\mass{V}+\mass{\delta V}$-measurable set $E\subset U$ with $\mass{V}(E)>0$, $\vboundary{V}{E}=0$ and $W=V\restrict E$;
\item For all $\beta\in\compactfunctions{U\times\grassmannian{n}{m}}{\real}$, $V(\beta)=\sum_{W\in\Pi}W(\beta)$ and
\item For all $\theta\in\compactfunctions{U}{\real}$, $\mass{\delta V}(\theta)=\sum_{W\in\Pi}\mass{\delta W}(\theta)$.
\end{enumerate}
Given a varifold $V\in\varifolds_m(U)$, $a\in U$, $r>0$ with $\openball{a}{r}\subset U$, and $\Pi$ a partition of $V_r$, we define $\Pi'=\{W\in\Pi:\support{\mass{W}}\cap\openball{a}{\frac{1}{2} r}\neq\emptyset\}$.
\end{definition}

The following lemma shows that Hypothesis \ref{basic hypothesis} is inherited by elements of a partition of $V$.

\begin{lemma}\label{inherited hypothesis}
Suppose $m,n\in\pinteger$, $U\subset\real^n$ and $V\in\varifolds_m(U)$ satisfy Hypothesis $\ref{basic hypothesis}$.
If $W\in\varifolds_m(U)$ satifies \ref{partition definition}(1) with respect to  $V$ in $U$ then it also satisfies Hypothesis $\ref{basic hypothesis}$.
\end{lemma}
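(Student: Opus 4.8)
The plan is to verify that $W = V\restrict E$ inherits every part of Hypothesis~\ref{basic hypothesis}, the whole argument resting on a single consequence of \ref{partition definition}(1): the vanishing of the generalised boundary $\vboundary{V}{E}$. To begin, $W = V\restrict E$ is an element of $\varifolds_m(U)$ with $\mass{W} = \mass{V}\restrict E\leq\mass{V}$, so every $\mass{V}$-measurable set or function is automatically $\mass{W}$-measurable. Since $\mass{\delta V}$ is a Radon measure, $\delta V$ is representable by a vector measure and the restriction $(\delta V)\restrict E$ makes sense; recalling that $\vboundary{V}{E}$ is precisely the discrepancy between $\delta(V\restrict E)$ and $(\delta V)\restrict E$ (as used in Definition~\ref{partition definition}, following \cite{menne-scharrer2022:arxiv} and \cite{menne2016.1}), the hypothesis $\vboundary{V}{E} = 0$ gives $\delta W = (\delta V)\restrict E$, and hence $\mass{\delta W} = \mass{\delta V}\restrict E$, which is again a Radon measure.

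For clause (ii) I would use that (ii) for $V$ is equivalent to $\mass{\delta V}_{\sing} = 0$, i.e.\ $\delta V = -\gmeancurvature(V,\cdot)\mass{V}$ as vector measures. Restricting to $E$ and using $\mass{W} = \mass{V}\restrict E$ yields $\delta W = -\gmeancurvature(V,\cdot)\mass{W}$; taking $\gmeancurvature(W,\cdot)$ to be $\gmeancurvature(V,\cdot)$ viewed as a $\mass{W}$-measurable $\real^n$-valued function on $U$, this reads $\delta W(g) = -\int_U\innerproduct{\gmeancurvature(W,x)}{g(x)}\integralvx{W}{x}$ for every $g\in\compactfunctions{U}{\real^n}$, which is clause (ii) for $W$ (in particular $\mass{\delta W}_{\sing} = 0$). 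For clause (i) I would invoke the Lebesgue density theorem for the Radon measure $\mass{V}$: for $\mass{V}$-almost every $x\in E$, $\mass{V}(\openball{x}{r}\setminus E)/\mass{V}(\openball{x}{r})\to 0$ as $r\to 0^{+}$. Writing $\mass{W}(\openball{x}{r}) = \mass{V}(\openball{x}{r}) - \mass{V}(\openball{x}{r}\setminus E)$ and dividing by $\unitmeasure{m}r^{m}$ shows that $\density{m}{\mass{W}}{x} = \density{m}{\mass{V}}{x}$ at every such $x$ at which the right side exists; since $\mass{W}$ is concentrated on $E$, clause (i) for $V$ then gives $\density{m}{\mass{W}}{x}\geq 1$ for $\mass{W}$-almost every $x\in U$.

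The only genuinely substantive step is the first one, extracting $\delta W = (\delta V)\restrict E$ from $\vboundary{V}{E} = 0$; here the precise definition of the generalised boundary underlying \ref{partition definition}(1) must be quoted with care. The remaining points — that the total variation of a restricted vector measure is the restriction of the total variation, that the Radon property survives restriction, and the density comparison above — are routine, and I expect no further obstacle.
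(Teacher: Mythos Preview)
Your proof is correct and follows essentially the same route as the paper: both extract $\delta W=(\delta V)\restrict E$ from $\vboundary{V}{E}=0$ and then restrict the mean-curvature identity, and both obtain $\density{m}{\mass{W}}{x}=\density{m}{\mass{V}}{x}$ for $\mass{W}$-a.e.\ $x$ via the Lebesgue density theorem (the paper cites \cite{federer1969}*{2.9.11} for this, which is precisely the argument you spelled out).
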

\begin{proof}
Firstly, it follows from \cite{federer1969}*{2.9.11} that $\density{m}{\mass{W}}{x}=\density{m}{\mass{V}}{x}$ for $\mass{W}$-almost every $x\in U$.
Now, let $E\subset U$ be the set that defines $W$.
Since $\vboundary{V}{E}=0$ we have for all $g\in\compactfunctions{U}{\real^n}$
\begin{equation*}
\begin{aligned}
\delta W(g) & = (\delta V\restrict E)(g)=-\int_U\innerproduct{\gmeancurvature(V,x)}{g_E(x)}\integralvx{V}{x}\\
            & = -\int_E\innerproduct{\gmeancurvature(V,x)}{g(x)}\integralvx{V}{x}\\
            & = -\int_U\innerproduct{\gmeancurvature(V,x)}{g(x)}\integralvx{W}{x},
\end{aligned}
\end{equation*}
where $g_E(x)=g(x)$ if $x\in E$ and $0$ otherwise.
This implies that $\gmeancurvature(W,x)=\gmeancurvature(V,x)$ for $\mass{W}$-almost every $x\in U$ and concludes the proof.
\end{proof}

\begin{lemma}[Monotonicity Formula \cite{simon1983:gmtbook}*{17.6,17.7,17.9(2)}]\label{monotonicity formula}
Let $m,n\in\pinteger$, $\delta,q,r\in\real$, $U\subset\real^n$ be an open set, $V\in\varifolds_m(U)$ satisfying Hypothesis \ref{basic hypothesis} and $a\in\support\mass{V}$.
Suppose $m\leq n$, $q>m$, $\econstant\delta\leq 1$, $r>0$ and $\closedball{a}{r}\subset U$.

If $V$ satisfies $\density{m}{\mass{V}}{a}\geq 1$ and $\smcurvature{V}{a}{r}{q}<\delta$ then for all $0<s<t\leq r$:
\begin{equation*}
\begin{aligned}
e^{\econstant\delta\left(\frac{s}{r}\right)^\mq}s^{-m}\mass{V}(\closedball{a}{s}) \leq & e^{\econstant\delta\left(\frac{t}{r}\right)^\mq}t^{-m}\mass{V}(\closedball{a}{t})\\
         & \quad - \int_{\closedball{a}{t}\setminus\closedball{a}{s}\times\grassmannian{n}{m}}\frac{|S^\perp_\sharp(x-a)|^2}{|x-a|^{m+2}}\integrald V_{(x,S)}
\end{aligned}
\end{equation*}
and
\begin{equation*}
\begin{aligned}
e^{-\econstant\delta\left(\frac{s}{r}\right)^\mq}s^{-m}\mass{V}(\closedball{a}{s}) \geq & e^{-\econstant\delta\left(\frac{t}{r}\right)^\mq}t^{-m}\mass{V}(\closedball{a}{t})\\
         & \quad - \int_{\closedball{a}{t}\setminus\closedball{a}{s}\times\grassmannian{n}{m}}\frac{|S^\perp_\sharp(x-a)|^2}{|x-a|^{m+2}}\integrald V_{(x,S)}
\end{aligned}
\end{equation*}
\end{lemma}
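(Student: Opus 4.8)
The plan is to run the classical monotonicity argument of \cite{simon1983:gmtbook} and to track precisely how the $L^q$ bound on $\gmeancurvature(V,\cdot)$ with $q>m$ produces the correction factors $e^{\pm\econstant\delta(\cdot/r)^{\mq}}$, in place of the exponential-in-$\rho$ factors valid when the mean curvature is bounded. I would fix $a\in\support\mass V$ and abbreviate, for $0<\rho\le r$,
\[
f(\rho)=\rho^{-m}\mass{V}(\closedball{a}{\rho}),\qquad
h(\rho)=\int_{\closedball{a}{\rho}\times\grassmannian{n}{m}}\frac{|S^\perp_\sharp(x-a)|^2}{|x-a|^{m+2}}\integrald V_{(x,S)} .
\]
Testing the first variation identity of Hypothesis \ref{basic hypothesis}(ii) on the radial vector fields $x\mapsto\gamma(|x-a|)(x-a)$ and letting $\gamma\uparrow\mathbbm{1}_{[0,\rho]}$ — the computation behind \cite{simon1983:gmtbook}*{17.3,17.5,17.6} — yields the exact identity
\[
f(t)-f(s)=h(t)-h(s)+\int_s^t\tau^{-m-1}\Bigl(\int_{\closedball{a}{\tau}}\innerproduct{x-a}{\gmeancurvature(V,x)}\integralvx{V}{x}\Bigr)\integrald\tau
\qquad(0<s<t\le r),
\]
whose $\tau$-integrand is locally bounded on $(0,r]$, since by H\"older's inequality the inner integral is at most $\tau\,\mass{V}(\closedball{a}{\tau})^{1-1/q}\bigl(\int_{\closedball{a}{r}}|\gmeancurvature(V,x)|^{q}\integralvx{V}{x}\bigr)^{1/q}$. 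Hence $f-h$ is absolutely continuous, so the right-continuous, locally $\mathrm{BV}$ functions $f$ and $h$ have the same singular part, and differentiating gives, for almost every $\rho\in(0,r]$,
\[
f'(\rho)=h'(\rho)+E(\rho),\qquad |E(\rho)|\le\rho^{-m}\int_{\closedball{a}{\rho}}|\gmeancurvature(V,x)|\integralvx{V}{x},
\]
using $|x-a|\le\rho$ on $\closedball{a}{\rho}$.

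Next I would estimate the error term by H\"older's inequality with exponents $q$ and $q/(q-1)$ and by the hypothesis $\smcurvature{V}{a}{r}{q}<\delta$, obtaining for almost every $\rho\in(0,r]$
\[
\begin{aligned}
\rho^{-m}\int_{\closedball{a}{\rho}}|\gmeancurvature(V,x)|\integralvx{V}{x}
&\le\rho^{-m}\Bigl(\int_{\closedball{a}{r}}|\gmeancurvature(V,x)|^{q}\integralvx{V}{x}\Bigr)^{1/q}\mass{V}(\closedball{a}{\rho})^{1-1/q}\\
&\le\delta\,r^{-\mq}\rho^{\mq-1}f(\rho)^{1-1/q},
\end{aligned}
\]
where the last step uses $\mass{V}(\closedball{a}{\rho})^{1-1/q}=\rho^{m-m/q}f(\rho)^{1-1/q}$ and $-m/q=\mq-1$. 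Thus almost everywhere $f'\ge h'-\delta r^{-\mq}\rho^{\mq-1}f^{1-1/q}$ and $f'\le h'+\delta r^{-\mq}\rho^{\mq-1}f^{1-1/q}$.

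To turn this into the asserted monotonicity I first need the a priori lower bound $f(\rho)\ge\unitmeasure{m}\,2^{-q}$ on $(0,r]$. Since $a\in\support\mass V$ forces $f>0$, the substitution $g=f^{1/q}$ converts the first differential inequality into $g'\ge-\tfrac1q\delta r^{-\mq}\rho^{\mq-1}$ almost everywhere (the $h'$-contribution being nonnegative); integrating from $s$ to $\rho$ — the singular part of $f$, hence of $g$, is nondecreasing — then letting $s\downarrow0$ and invoking $\density{m}{\mass{V}}{a}\ge1$ gives $f(\rho)^{1/q}\ge\unitmeasure{m}^{1/q}-\tfrac{\delta}{q\mq}(\rho/r)^{\mq}$, and since $\econstant\delta\le1$ forces $\delta\le\tfrac12\mq\unitmeasure{m}^{1/q}$ while $q>m\ge1$, the subtracted term is at most $\tfrac12\unitmeasure{m}^{1/q}$, whence the bound. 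With it $f^{1-1/q}\le2\unitmeasure{m}^{-1/q}f$ on $(0,r]$, so setting $\phi(\rho)=\econstant\delta(\rho/r)^{\mq}$ — which has $\phi'(\rho)=2\unitmeasure{m}^{-1/q}\delta r^{-\mq}\rho^{\mq-1}$ precisely because $\econstant=2\unitmeasure{m}^{-1/q}\mq^{-1}$ — the two differential inequalities read $f'\ge h'-\phi'f$ and $f'\le h'+\phi'f$, i.e.\ $(e^{\phi}f)'\ge e^{\phi}h'$ and $(e^{-\phi}f)'\le e^{-\phi}h'$ almost everywhere. Because $f$ and $h$ have identical singular parts, these lift to $\integrald(e^{\phi}f)\ge e^{\phi}\integrald h$ and $\integrald(e^{-\phi}f)\le e^{-\phi}\integrald h$ as Lebesgue--Stieltjes measures on $(0,r]$; integrating over $(s,t]$ and using $e^{\phi}\ge1\ge e^{-\phi}$ together with $h(t)\ge h(s)$ yields $e^{\phi(t)}f(t)-e^{\phi(s)}f(s)\ge h(t)-h(s)$ and $e^{-\phi(t)}f(t)-e^{-\phi(s)}f(s)\le h(t)-h(s)$, which after unwinding $f$, $h$, $\phi$ are the two displayed inequalities.

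The main obstacle is the \emph{sublinear} factor $f^{1-1/q}$ that H\"older's inequality attaches to the curvature error term — it blocks any direct appeal to the bounded-curvature monotonicity formula; the substitution $g=f^{1/q}$ used to extract the a priori density bound, combined with the normalisation $\econstant=2\unitmeasure{m}^{-1/q}\mq^{-1}$, is exactly what reabsorbs it into a linear correction with the stated exponent $\mq=1-\tfrac mq$. The remaining ingredients — the first-variation computation, the passage $\gamma\uparrow\mathbbm{1}_{[0,\rho]}$, and the Lebesgue--Stieltjes bookkeeping for the atomic part of $\mass{V}$ on spheres centred at $a$ (where $V$, being rectifiable, is carried by pieces tangent to the sphere, so $f$ and $h$ jump by the same amount) — are classical and already contained in \cite{simon1983:gmtbook}*{17.3--17.9}, which is why that reference is cited as the source of the lemma.
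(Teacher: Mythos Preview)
The paper does not supply its own proof of this lemma; it is quoted directly from Simon's lecture notes (17.6, 17.7, 17.9(2)), so there is no in-paper argument to compare against. Your proposal is a correct, self-contained derivation: the first-variation identity, the H\"older estimate producing the sublinear error $\delta r^{-\mq}\rho^{\mq-1}f^{1-1/q}$, the $g=f^{1/q}$ substitution to extract the a priori lower bound $f\ge\unitmeasure{m}2^{-q}$ from $\density{m}{\mass{V}}{a}\ge1$ and $\econstant\delta\le1$, and the resulting linear Gronwall step with $\phi'=\econstant\mq\delta r^{-\mq}\rho^{\mq-1}$ are exactly the ingredients behind Simon 17.9(2). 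Your treatment of the singular parts (jumps of $f$ and $h$ agree because $\mass{V}$-a.e.\ tangent plane on a sphere of positive mass is tangent to that sphere) is correct, if terse; in Simon this is absorbed into the distributional form of the basic identity 17.6.
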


\begin{lemma}\label{lower bound mass}
Let $m,n\in\pinteger$, $\delta,q,\gamma\in\real$, $U\subset\real^n$ be an open set, $V\in\varifolds_m(U)$ satisfying Hypothesis \ref{basic hypothesis} and $a\in\support\mass{V}$.
Suppose $m\leq n$, $q>m$, $\econstant\delta<1$ and $0<\gamma<1$.
If $r>0$ with $\closedball{a}{r}\subset U$ and $W\in\varifolds_m(\openball{a}{r})$ satisfies \ref{partition definition}(1) with respect to $V_r\in\varifolds_m(\openball{a}{r})$ are such that
\begin{enumerate}[(i)]
\item $\smcurvature{V}{a}{r}{q}\leq\delta$ and
\item $\support{\mass{W}}\cap\openball{a}{\gamma r}\neq\emptyset$,
\end{enumerate}
then 
\begin{equation*}
r^{-m}\mass{W}(\openball{a}{r})\geq e^{-\econstant\delta(1-\gamma)^\mq}(1-\gamma)^{m}\unitmeasure{m}.
\end{equation*}
\end{lemma}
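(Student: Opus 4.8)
The plan is to apply the monotonicity formula, Lemma \ref{monotonicity formula}, to $W$ — centred not at a point of $\support\mass{W}\cap\openball{a}{\gamma r}$, whose $m$-density we do not control a priori, but at a nearby point of $m$-density at least $1$. First I would invoke Lemma \ref{inherited hypothesis} for the varifold $V_r\in\varifolds_m(\openball{a}{r})$ (which satisfies Hypothesis \ref{basic hypothesis} by the remark following that hypothesis): this gives that $W$ satisfies Hypothesis \ref{basic hypothesis} in $\openball{a}{r}$ and, by the proof of that lemma, that $\gmeancurvature(W,x)=\gmeancurvature(V,x)$ for $\mass{W}$-almost all $x\in\openball{a}{r}$; note also that $\mass{W}\leq\mass{V}$ since $W=V\restrict E$ for some $E\subset\openball{a}{r}$. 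Hypothesis (ii) forces $\mass{W}(\openball{a}{\gamma r})>0$, so, using $\density{m}{\mass{W}}{x}\geq 1$ for $\mass{W}$-almost all $x$, I would pick $c\in\openball{a}{\gamma r}$ with $\density{m}{\mass{W}}{c}\geq 1$; in particular $c\in\support\mass{W}$. Setting $\rho=(1-\gamma)r$, the bound $|c-a|<\gamma r$ yields $\closedball{c}{\rho}\subset\openball{a}{r}$.

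Next I would transfer the integrability condition. Since $\gmeancurvature(W)=\gmeancurvature(V)$ $\mass{W}$-almost everywhere, $\mass{W}\leq\mass{V}$ and $\closedball{c}{\rho}\subset\closedball{a}{r}$, one obtains $\smcurvature{W}{c}{\rho}{q}\leq(\rho/r)^\mq\,\smcurvature{V}{a}{r}{q}\leq(1-\gamma)^\mq\delta$, and therefore $\econstant(1-\gamma)^\mq\delta<\econstant\delta<1$ because $0<(1-\gamma)^\mq<1$. Applying Lemma \ref{monotonicity formula} to $W$ in $\openball{a}{r}$ centred at $c$, with radius $\rho$ and constant $(1-\gamma)^\mq\delta$, the first inequality with $t=\rho$, after letting $s\to 0^+$ and discarding the non-negative tilt-excess integral, gives $\unitmeasure{m}\leq\density{m}{\mass{W}}{c}\,\unitmeasure{m}\leq e^{\econstant(1-\gamma)^\mq\delta}\rho^{-m}\mass{W}(\closedball{c}{\rho})$. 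Since $\closedball{c}{\rho}\subset\openball{a}{r}$ and $\rho^{m}=(1-\gamma)^{m}r^{m}$, rearranging produces exactly $r^{-m}\mass{W}(\openball{a}{r})\geq e^{-\econstant\delta(1-\gamma)^\mq}(1-\gamma)^{m}\unitmeasure{m}$.

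I expect no serious obstacle; two points need a little care. The first is recognising that monotonicity must be applied at a density-$\geq 1$ point of $\openball{a}{\gamma r}$ rather than at the support point furnished by hypothesis (ii) — such a point exists since $\mass{W}(\openball{a}{\gamma r})>0$ by hypothesis (ii), while $\density{m}{\mass{W}}{x}\geq 1$ for $\mass{W}$-almost all $x$ by Lemma \ref{inherited hypothesis}. The second is that Lemma \ref{monotonicity formula} requires the strict bound $\smcurvature{V}{a}{r}{q}<\delta$: in the borderline case $\smcurvature{W}{c}{\rho}{q}=(1-\gamma)^\mq\delta$ one runs the argument with $(1-\gamma)^\mq\delta+\eta$ in place of $(1-\gamma)^\mq\delta$ for small $\eta>0$ and then lets $\eta\downarrow 0$.
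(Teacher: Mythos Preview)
Your proof is correct and follows essentially the same approach as the paper: pick a point $c\in\openball{a}{\gamma r}$ with $\density{m}{\mass{W}}{c}\geq 1$ via Lemma~\ref{inherited hypothesis}, then apply the monotonicity formula to $W$ centred at $c$ with radius $(1-\gamma)r$ and pass to the limit. The only cosmetic difference is that the paper keeps the normalisation radius $r$ and constant $\delta$ in the exponent throughout, whereas you rescale to radius $\rho=(1-\gamma)r$ with constant $(1-\gamma)^\mq\delta$; these yield the same inequality.
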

\begin{proof}
By Lemma \ref{inherited hypothesis} we can find $x\in\support{W}\cap\openball{a}{\gamma r}$ such that $\density{m}{W}{x}\geq 1$.
It follows from Lemma \ref{monotonicity formula} that for all $0<t<s<(1-\gamma)r$ we have
\begin{equation*}
\begin{aligned}
e^{\econstant\delta \left(\frac{t}{r}\right)^{\mq}}t^{-m}\mass{W}(\closedball{x}{t}) & \leq e^{\econstant\delta \left(\frac{s}{r}\right)^{\mq}}s^{-m}\mass{W}(\closedball{x}{s})\\
                                                                       & \leq e^{\econstant\delta (1-\gamma)^{\mq}}s^{-m}\mass{W}(\closedball{a}{\gamma r+ s})
\end{aligned}
\end{equation*}
We conclude the proof by letting $t$ tend to $0$ and $s$ tend to $(1-\gamma)r$.
\end{proof}

\begin{lemma}\label{support of component contains a}
Let $m,n\in\pinteger$, $q,Q\in\real$, $U\subset\real^n$ an open set, $V\in\varifolds_m(U)$ satisfying Hypothesis \ref{basic hypothesis} and $a\in\support\mass{V}$.
Suppose $m\leq n$, $q>m$ and $Q\geq 1$.

There exist $\varepsilon_0(m,Q)>0$ and $\varepsilon_1(n,m,q,Q)>0$ with the following property:
If $r>0$, $\closedball{a}{r}\subset U$, $\density{m}{\mass{V}}{a}=Q$ and $W\in\varifolds_m(\openball{a}{r})$ satisfies \ref{partition definition}(1) with respect to $V_r\in\varifolds_m(\openball{a}{r})$ are such that
\begin{enumerate}[(i)]
\item $r^{-m}\mass{V}(\closedball{a}{r})\leq (Q+\varepsilon_0)\unitmeasure{m}$;
\item $r^\mq\totalcurvature{V}{\closedball{a}{r}}{q}\leq\varepsilon_1$ and
\item $\support{\mass{W}}\cap\openball{a}{\frac{r}{2}}\neq\emptyset$,
\end{enumerate}
then $a\in\support{\mass{W}}$.

In particular, if $\Pi$ is a partition of $V_r$ then $\cardinality{\Pi'}\leq Q$.
\end{lemma}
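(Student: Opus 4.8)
The plan is to argue by contradiction, using the monotonicity formula together with the key observation that, if $a$ is not hit by $\support{\mass W}$, then the \emph{complementary} component of $V_r$ must carry the entire density $Q$ at $a$, which leaves no room for the mass that $W$ is forced to have near its closest point to $a$. I would take $\varepsilon_0:=2^{-m-1}$, depending only on $m$, and fix $\varepsilon_1=\varepsilon_1(n,m,q,Q)>0$ at the end, small enough that $\econstant\varepsilon_1<1$ and $e^{2\econstant\varepsilon_1}(Q+\varepsilon_0)<Q+2^{-m}$; the latter is possible precisely because $\varepsilon_0<2^{-m}$.

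First I would record the set-up. Write $W=V_r\restrict E$ with $E\subset\openball a r$ as in \ref{partition definition}(1). Since $\closedball a r\subset U$, the varifold $V_r$ satisfies Hypothesis~\ref{basic hypothesis}, hence so does $W$ by Lemma~\ref{inherited hypothesis}, with $\gmeancurvature(W,\cdot)=\gmeancurvature(V,\cdot)$ $\mass W$-almost everywhere; consequently $\sigma^\mq\totalcurvature{W}{\closedball x\sigma}{q}\le r^\mq\totalcurvature{V}{\closedball a r}{q}\le\varepsilon_1$ whenever $\closedball x\sigma\subset\closedball a r$, and likewise for any other component of $V_r$. Now suppose, towards a contradiction, that $a\notin\support{\mass W}$. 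Since $a\in\support{\mass{V_r}}$, some ball about $a$ has positive $\mass{V_r}$-measure and zero $\mass W$-measure, so $\mass{V_r}(\openball a r\setminus E)>0$; as $\vboundary{V_r}{\openball a r\setminus E}=0$ (because $\vboundary{V_r}{E}=0$), the varifold $\widetilde W:=V_r\restrict(\openball a r\setminus E)$ satisfies \ref{partition definition}(1) with respect to $V_r$, hence Hypothesis~\ref{basic hypothesis} by Lemma~\ref{inherited hypothesis}. Moreover $\mass{V_r}=\mass W+\mass{\widetilde W}$ with mutually singular summands, $\density m{\mass W}a=0$ because $\mass W$ vanishes near $a$, and therefore $\density m{\mass{\widetilde W}}a=\density m{\mass{V_r}}a=\density m{\mass V}a=Q$ by additivity of the density along this splitting.

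Next I would set $\rho:=\distance(a,\support{\mass W})$, so $0<\rho<\tfrac r2$ by (iii), and carry out the mass comparison at scale $2\rho$. For all small $s>0$ choose $y_s\in\openball a{\rho+s}$ with $\density m{\mass W}{y_s}\ge1$ (possible since $\mass W(\openball a{\rho+s})>0$ and $\density m{\mass W}{\cdot}\ge1$ $\mass W$-almost everywhere) and apply Lemma~\ref{monotonicity formula} to $W$ at $y_s$ with reference radius $\rho$, using $\closedball{y_s}\rho\subset\closedball a{2\rho+s}\subset\openball a r$ and the curvature bound above, to get $\mass W(\closedball a{2\rho+s})\ge\mass W(\closedball{y_s}\rho)\ge e^{-\econstant\varepsilon_1}\unitmeasure m\rho^m$. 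Applying Lemma~\ref{monotonicity formula} to $\widetilde W$ at $a$ with reference radius $2\rho+s$ (legitimate since $\density m{\mass{\widetilde W}}a=Q\ge1$) gives $\mass{\widetilde W}(\closedball a{2\rho+s})\ge e^{-\econstant\varepsilon_1}Q\unitmeasure m(2\rho+s)^m$, whence, adding,
\[
\mass V(\closedball a{2\rho+s})=\mass W(\closedball a{2\rho+s})+\mass{\widetilde W}(\closedball a{2\rho+s})\ge e^{-\econstant\varepsilon_1}\unitmeasure m\bigl(\rho^m+Q(2\rho+s)^m\bigr).
\]
Meanwhile Lemma~\ref{monotonicity formula} for $V$ at $a$ with reference radius $r$, together with (i), yields $\mass V(\closedball a{2\rho+s})\le e^{\econstant\varepsilon_1}(Q+\varepsilon_0)\unitmeasure m(2\rho+s)^m$. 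Letting $s\to0$ and dividing by $\unitmeasure m(2\rho)^m$ gives $2^{-m}+Q\le e^{2\econstant\varepsilon_1}(Q+\varepsilon_0)$, contradicting the choice of $\varepsilon_1$; hence $a\in\support{\mass W}$. For the concluding assertion: every $W\in\Pi'$ satisfies (iii) and \ref{partition definition}(1) with respect to $V_r$, so $a\in\support{\mass W}$ by the above, and then $\density m{\mass W}a\ge1$ (as in the previous step, choosing $\mass W$-a.e.-density points in shrinking balls about $a$); summing over $\Pi$ via Definition~\ref{partition definition}(2), which gives $\mass V=\sum_{W\in\Pi}\mass W$, one obtains $\cardinality{\Pi'}\le\sum_{W\in\Pi'}\density m{\mass W}a\le\sum_{W\in\Pi}\density m{\mass W}a\le\density m{\mass V}a=Q$.

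The step I expect to be the crux is obtaining a lower bound for $\mass V(\closedball a{2\rho})$ that beats $(Q+\varepsilon_0)\unitmeasure m(2\rho)^m$ for \emph{arbitrary} $Q\ge1$: estimating $\mass V$ directly on $\closedball a\rho$ only produces roughly $Q\rho^m=2^{-m}Q(2\rho)^m$, which is worthless once $Q\ge2$. Passing to the complementary component $\widetilde W$ is precisely what makes the argument uniform in $Q$ — because $a\notin\support{\mass W}$, this component still has density $Q$ at $a$ and can be estimated at the \emph{full} scale $2\rho$, so the $\rho^m$ of mass contributed by $W$ near its closest point to $a$ is genuine surplus. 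The remaining ingredients — inheritance of Hypothesis~\ref{basic hypothesis} by components, additivity of the density along a partition, and the fact that the density is at least $1$ at every point of the support — are routine consequences of Lemmas~\ref{inherited hypothesis} and~\ref{monotonicity formula} and Definition~\ref{partition definition}.
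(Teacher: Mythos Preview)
Your argument is correct and takes a genuinely different route from the paper. The paper works \emph{directly}: it exploits the tilt-excess integral $G_V(s,r)=\int_{(\closedball a r\setminus\closedball a s)\times\grassmannian n m}|x-a|^{-m-2}|S^\perp_\sharp(x-a)|^2\,\integrald V_{(x,S)}$ appearing in Lemma~\ref{monotonicity formula}. Hypothesis~(i) (near-equality of $r^{-m}\mass V(\closedball a r)$ and $Q\unitmeasure m$) forces $G_V(0,r)\le 2^{-m-2}\unitmeasure m$; since $G_W\le G_V$, the \emph{second} inequality in Lemma~\ref{monotonicity formula} together with Lemma~\ref{lower bound mass} then gives the explicit positive bound $\density m{\mass W}a\ge 2^{-m-1}$, hence $a\in\support{\mass W}$. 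You instead argue by contradiction: assuming $a\notin\support{\mass W}$, you pass to the complementary component $\widetilde W=V_r\restrict(\openball a r\setminus E)$, which inherits the full density $Q$ at $a$, and compare masses at scale $2\rho$ where $\rho=\distance(a,\support{\mass W})$; the extra $e^{-\econstant\varepsilon_1}\unitmeasure m\rho^m$ of mass that $W$ is forced to have near its closest point to $a$ then contradicts the upper bound coming from~(i). Your approach never touches the $G$-term and is a clean mass-balance; as a byproduct your $\varepsilon_0=2^{-m-1}$ is independent of $Q$, marginally sharper than the paper's choice. The paper's approach, on the other hand, delivers a quantitative lower bound $\density m{\mass W}a\ge 2^{-m-1}$ for every such $W$ without any contradiction detour, which is a slightly stronger conclusion than merely $a\in\support{\mass W}$.
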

\begin{proof}
First we fix $\varepsilon_0,\varepsilon_1>0$ sufficiently small so that $\varepsilon_0\left(1+\frac{1}{2^{m+1}Q}\right)\leq \frac{1}{2^{m+1}}$ and $e^{\Lambda\varepsilon_1}\leq\min\{1+\frac{1}{2^{m+1}Q},\frac{2}{\sqrt{3}}\}$.
Given $0<s<r$ we simplify notation and write 
\begin{equation*}
G_V(s,r)=\int_{(\closedball{a}{r}\setminus\closedball{a}{s})\times\grassmannian{n}{m}}|x-a|^{-m-2}|S^\perp_\sharp(x-a)|^2\integrald V_{(x,S)},
\end{equation*}
where $S_\sharp$ denotes the orthogonal projection associated to a plane $S$.

It follows from Lemma \ref{monotonicity formula} that for all $0<s<r$ we have
\begin{equation*}
e^{\econstant\varepsilon_1\left(\frac{s}{r}\right)^\mq}s^{-m}\mass{V}(\closedball{a}{s}) \leq e^{\econstant\varepsilon_1}r^{-m}\mass{V}(\openball{a}{r})-G_V(s,r).
\end{equation*}
By letting $s$ tend to $0$, using condition $(i)$ and the choices of $\varepsilon_0,\varepsilon_1$ we obtain $G_V(0,r)\leq 2^{-m-2}\unitmeasure{m}$.
Next we observe that $W$ also satisfies Hypothesis \ref{basic hypothesis} by Lemma \ref{inherited hypothesis} and $G_W(0,r)\leq G_V(0,r)$.
Using Lemma \ref{lower bound mass} and \ref{monotonicity formula} on $W$ we have for $0<s<r$,
\begin{equation*}
\begin{aligned}
e^{-\econstant\varepsilon_1 \left(\frac{s}{r}\right)^\mq}s^{-m}\mass{W}(\closedball{a}{s}) & \geq e^{-\econstant\varepsilon_1}r^{-m}\mass{W}(\openball{a}{r})-G_W(s,r)\\
                                                                     & \geq e^{-2\econstant\varepsilon_1}2^{-m}\unitmeasure{m}-2^{-m-2}\unitmeasure{m}\\
																																		 & \geq 3\cdot 2^{-m-2}\unitmeasure{m}-2^{-m-2}\unitmeasure{m}\\
                                                                     & \geq 2^{-m-1}\unitmeasure{m}.\\
\end{aligned}
\end{equation*}
Thus, $\density{m}{\mass{W}}{a}\geq 2^{-m-1}$, which implies $a\in\support{\mass{W}}$.

Finally, it follows from \cite{simon1983:gmtbook}*{17.8} and \ref{basic hypothesis}(i) that $\density{m}{\mass{W}}{a}\geq 1$.
If $\Pi$ is a partition of $V_r$ it follows that 
\begin{equation*}
\density{m}{\mass{V}}{a}\geq \sum_{W\in\Pi'}\density{m}{\mass{W}}{a}\geq \cardinality{\Pi'},
\end{equation*}
which concludes the proof.
\end{proof}

\section{Partition Lemma}

We begin by proving that given a generalised $V$-weakly differentiable function (see \cite{menne2016.1}*{Definition 8.3}) whose image is disconnected it is possible to split $V$ in such a way that the function remains generalised weakly differentiable with respect to the splitting.
We may not guarantee that it will define an indecomposable component (as in \cite{menne2016.1}*{6.2}), but we can still use this to construct a partition of $V$ that preserves generalised weak differentiability with respect to its elements.

\begin{lemma}\label{separation lemma}
Let $U\subset\real^n$ be an open set, $Y$ be a finite dimensional normed vector space, $D\subset Y$ a closed set, $K\subset Y\setminus D$ a compact set, $V\in\varifolds_m(U)$, $\mass{\delta V}$ is a Radon measure, $\mass{\delta V}_{\sing}=0$ and $f\in\vweakfunctions{V}{Y}$ such that
$\mass{V}\left(U\setminus\preimage{f}{D\cup K}\right)=0$ and
$\mass{V}(\preimage{f}{K})>0$.

There exists $W\in\varifolds_m(U)$ a partitioned component of $V$ such that
\begin{enumerate}
\item $\mass{W}(U\setminus \preimage{f}{K})=0$ and
\item $f\in\vweakfunctions{W}{Y}$.
\end{enumerate}
\end{lemma}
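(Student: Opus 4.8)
The plan is to build the set $E$ defining $W$ directly from the weakly differentiable function $f$ by using the characterization of generalised $V$-weakly differentiable functions in terms of their superlevel/preimage structure. Concretely, I would set $E = \preimage{f}{K}$ and let $W = V \restrict E$. Conclusion (1) is then immediate from the hypothesis $\mass{V}(U\setminus\preimage{f}{D\cup K})=0$ together with $D\cap K=\emptyset$: indeed $U\setminus\preimage{f}{K}$ differs from $\preimage{f}{D}$ by a $\mass{V}$-null set, so $\mass{W}(U\setminus\preimage{f}{K})=\mass{V}(E\setminus\preimage{f}{K})=0$. The substance of the lemma is therefore conclusion (2), namely that $f$ restricted to $W$ is still generalised $W$-weakly differentiable, which in particular requires $\vboundary{W}{E}=0$ so that $W$ is a legitimate partitioned component in the sense of Definition \ref{partition definition}(1) and Lemma \ref{inherited hypothesis} applies.

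The key step is to exhibit, for a given $\gamma \in \smoothfunctions{\real^N}{\real}$ composed with $f$ (in the notation of \cite{menne2016.1}*{Definition 8.3}), a $\lploc{1}(\mass{W})$ representative of the generalised weak derivative of $\gamma\circ f$ with respect to $W$. Since $D$ is closed and $K$ is compact with $D\cap K=\emptyset$, I would choose a cutoff $\chi \in \smoothcompact{Y}{\real}$ with $\chi = 1$ on a neighbourhood of $K$ and $\support\chi \cap D = \emptyset$; then $\chi\circ f$ is a bounded generalised $V$-weakly differentiable function with its weak derivative supported (up to $\mass{V}$-null sets) in $\preimage{f}{\support(\weakd\chi)} \subset \preimage{f}{K}$, because on the $\mass{V}$-full set $\preimage{f}{D\cup K}$ the function $f$ only takes values in $D\cup K$ and $\weakd\chi$ vanishes on both a neighbourhood of $K$ and on $D$. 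This localisation is exactly what lets the $W$-weak derivative be read off from the $V$-weak derivative: for test vector fields supported in $U$, the defining integral identity for $\weakd^{V}(\gamma\circ f)$ restricted to $E$ coincides with the one for $\weakd^{W}(\gamma\circ f)$, the boundary terms cancelling precisely because the derivative of the cutoff is concentrated inside $E$.

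The main obstacle I anticipate is verifying $\vboundary{W}{E}=0$, i.e.\ that $E=\preimage{f}{K}$ is a set of locally finite perimeter relative to $V$ with vanishing $V$-boundary, rather than merely $\mass{V}+\mass{\delta V}$-measurable. Here the disjointness of $D$ and $K$ is essential: because $\mass{V}$-almost all of $U$ is partitioned into $\preimage{f}{D}$ and $\preimage{f}{K}$, the indicator $\mathbbm{1}_E$ agrees $\mass{V}$-a.e.\ with $\chi\circ f$ for the cutoff $\chi$ above, and $\chi\circ f \in \vweakfunctions{V}{\real}$ with weak derivative that one shows is in fact the zero function $\mass{V}$-a.e.\ — since on $\preimage{f}{D}$ the composite is locally constant $0$ and on $\preimage{f}{K}$ it is locally constant $1$, and these exhaust a full-measure set. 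A set whose indicator is $V$-weakly differentiable with null weak derivative has $\vboundary{V}{E}=0$ by the distributional boundary characterisation, which is \cite{menne2016.1}*{Definition 5.1 and the surrounding discussion}. Once $\vboundary{W}{E}=0$ is in hand, both remaining points follow: $W$ is a partitioned component, and the chain rule / localisation argument of the previous paragraph upgrades $f\in\vweakfunctions{V}{Y}$ to $f\in\vweakfunctions{W}{Y}$, completing the proof.
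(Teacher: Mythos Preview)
Your approach is essentially the same as the paper's: both introduce a smooth cutoff $\chi$ (the paper calls it $\phi$) on $Y$ with $\chi\equiv 1$ near $K$ and $\chi\equiv 0$ near $D$, observe via the chain rule \cite{menne2016.1}*{8.12} that $\chi\circ f\in\vweakfunctions{V}{\real}$ with $\vweakd{V}{(\chi\circ f)}=0$ $\mass{V}$-a.e.\ (since $\weakd\chi$ vanishes on $D\cup K$, which carries all of $\mass{V}$), and deduce $\vboundary{V}{E}=0$. The paper makes this last deduction precise via the coarea formula \cite{menne2016.1}*{8.29} applied to the superlevel sets of $\phi\circ f$, rather than by your appeal to ``Definition~5.1''; your version can also be made to work by testing the weak-differentiability identity for $\chi\circ f$ against any $\gamma$ with $\gamma(0)=0$, $\gamma(1)=1$.

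Where your sketch has a genuine gap is in the passage from $f\in\vweakfunctions{V}{Y}$ to $f\in\vweakfunctions{W}{Y}$. You write that ``the defining integral identity for $\weakd^{V}(\gamma\circ f)$ restricted to $E$ coincides with the one for $\weakd^{W}(\gamma\circ f)$, the boundary terms cancelling precisely because the derivative of the cutoff is concentrated inside $E$.'' But $\support(\weakd\chi)$ is \emph{disjoint} from $K$ --- it lies in the shell between $K$ and $D$ --- so $\preimage{f}{\support(\weakd\chi)}$ is $\mass{V}$-null rather than contained in $E$, and your earlier inclusion $\preimage{f}{\support(\weakd\chi)}\subset\preimage{f}{K}$ is false. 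More importantly, one cannot simply ``restrict the $V$-identity to $E$'': the first-variation term $\delta V((\gamma\circ f)\theta)$ does not split into $E$ and $U\setminus E$ parts without already knowing the corresponding identity for $V\restrict(U\setminus E)$, which is circular. The paper resolves this with a device you do not mention: it sets $\Psi(y)=\phi(y)\,y$, so that $\Psi\circ f$ agrees with $f$ on $E$ and vanishes off $E$ (up to null sets), applies the chain rule to get $\Psi\circ f\in\vweakfunctions{V}{Y}$, and then splits \emph{that} identity --- the off-$E$ contributions are now all multiples of $\gamma(0)$ and cancel exactly between the $\delta V$ term and the $\int P_\sharp\cdot\weakd\theta$ term. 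An equivalent fix is to replace each test function $\gamma$ by $\gamma\cdot\chi$; either way, this multiplicative localisation in the target $Y$ is the missing ingredient in your outline.
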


\begin{proof}
First we take $\delta>0$ so that $\closedballY{Y}{K}{2\delta}\cap D=\emptyset$ and $\phi:Y\rightarrow \real$ a smooth compactly supported function satisfying:
\begin{enumerate}[(a)]
\item $0\leq \phi\leq 1$;
\item $\phi=1$ on $\closedballY{Y}{K}{\delta}$;
\item $\phi=0$ on $Y\setminus\openballY{Y}{K}{2\delta}$ and
\item $\|\weakd\phi\|\leq 4\delta$.
\end{enumerate}
It follows from \cite{menne2016.1}*{Lemma 8.12} that $\phi\circ f\in\vweakfunctions{V}{\real}$ and $\vweakd{V}{(\phi\circ f)}(x)=\weakd\phi(f(x))\circ\vweakd{V}{f}(x)$ for $\mass{V}$-a.e. $x\in\domain{f}$.
Since $\mass{V}(\preimage{f}{\closedballY{Y}{K}{2\delta}\setminus K})=0$ we further have
\begin{equation*}
\phi\circ f(x)=\left\{
\begin{aligned}
1 &\text{, for }\mass{V}\text{-a.e. }x\in\preimage{f}{\openballY{Y}{K}{2\delta}};\\
0 &\text{, for all }x\in\notpreimage{f}{\openballY{Y}{K}{2\delta}}
\end{aligned}\right.
\end{equation*}
and $\vweakd{V}{(\phi\circ f)}(x)=0$ for $\mass{V}$-almost every $x\in\domain{f}$.

We write $E_s=\{x\in\domain{f}:\phi\circ f(x)>s\}$ and observe that 
\begin{equation*}
\preimage{f}{\closedballY{Y}{K}{\delta}}\subset E_s\subset \preimage{f}{\openballY{Y}{K}{2\delta}} \text{ and } \mass{V}(E_s\setminus \preimage{f}{K})=0
\end{equation*}
 for all $0\leq s< 1$.
Next, from \cite{menne2016.1}*{Lemma 8.29} we compute:
\begin{equation*}
\vboundary{V}{E_s}=\lim_{\varepsilon\rightarrow 0^+}\varepsilon^{-1}\int_{E_s\setminus E_{s+\varepsilon}}\langle\theta(x),\vweakd{V}{(\phi\circ f)}(x)\rangle\integrald\mass{V}_x=0
\end{equation*}
for all $\theta\in\smoothcompact{U,\real^n}$ and $0\leq s< 1$.

Next we define $\Psi:Y\rightarrow Y$ as $\Psi(y)=\phi(y)y$.
As before, it follows from \cite{menne2016.1}*{Lemma 8.12} that $\Psi\circ f\in\vweakfunctions{V}{Y}$,
\begin{equation*}
\Psi\circ f(x)=\left\{
\begin{aligned}
f(x) &\text{, for }\mass{V}\text{-a.e. }x\in\preimage{f}{\openballY{Y}{K}{2\delta}};\\
0 &\text{, for all }x\in\notpreimage{f}{\openballY{Y}{K}{2\delta}}
\end{aligned}\right.
\end{equation*}
and
\begin{equation*}
\vweakd{V}{(\Psi\circ f)}(x)=\left\{
\begin{aligned}
\vweakd{V}{f}(x) &\text{, for }\mass{V}\text{-a.e. }x\in\preimage{f}{\openballY{Y}{K}{2\delta}};\\
0 &\text{, for }\mass{V}\text{-a.e. }x\not\in\preimage{f}{\openballY{Y}{K}{2\delta}}.
\end{aligned}\right.
\end{equation*}

Finally we fix $E=E_0$ and put $W=V\restrict E$.
It follows from the above that $W$ is a partitioned component of $V$ that satisfies condition $(1)$.

Take $\theta\in\smoothcompact{U}{\real^n}$ and $\gamma\in\smoothfunctions{Y}{\real}$ such that $\support{\weakd\gamma}$ is compact, we will prove that $f\in\vweakfunctions{W}{Y}$.

We use $\vboundary{V}{E}=0$, $\mass{\delta V}_{\sing}=0$ and the above to compute
\begin{equation*}
\begin{aligned}
\delta V((\gamma\circ (\Psi\circ f)\theta)) & = (\delta V)\restrict E((\gamma\circ (\Psi\circ f))\theta)+(\delta V)\restrict (U\setminus E)((\gamma\circ (\Psi\circ f))\theta)\\
                                            & = (\delta V)\restrict E((\gamma\circ f)\theta) + \gamma(0)(\delta V)\restrict (U\setminus E)(\theta)\\
                                            & = \delta W((\gamma\circ f))\theta) + \gamma(0)(\delta V)\restrict (U\setminus E)(\theta)
\end{aligned}
\end{equation*}
and
\begin{equation*}
\begin{aligned}
\int_{U\times\grassmannian{n}{m}}\gamma(\Psi\circ f(x))P_{\sharp}\cdot\weakd & \theta(x)\integrald V_{(x,P)}  =\int_{E\times\grassmannian{n}{m}}\gamma(\Psi\circ f(x))P_{\sharp}\cdot\weakd \theta(x)\integrald V_{(x,P)}\\
                                                                              &\qquad\quad +\int_{(U\setminus E)\times\grassmannian{n}{m}}\gamma(\Psi\circ f(x))P_{\sharp}\cdot\weakd \theta(x)\integrald V_{(x,P)}\\
                                                                              &\qquad = \int_{E\times\grassmannian{n}{m}}\gamma(f(x))P_{\sharp}\cdot\weakd \theta(x)\integrald V_{(x,P)}\\
																																							&\qquad\quad +\gamma(0)\int_{(U\setminus E)\times\grassmannian{n}{m}}P_{\sharp}\cdot\weakd \theta(x)\integrald V_{(x,P)}\\
																																							&\qquad = \int\gamma(f(x))P_{\sharp}\cdot\weakd \theta(x)\integrald W_{(x,P)}\\
																																							&\qquad\quad +\gamma(0)(\delta V)\restrict (U\setminus E)(\theta)
\end{aligned}
\end{equation*}

Since $\Psi\circ f\in\vweakfunctions{V}{Y}$ we have
\begin{equation*}
\begin{aligned}
0 & = \delta V((\gamma\circ (\Psi\circ f)\theta)) - \int\gamma(\Psi\circ f(x))P_{\sharp}\cdot\weakd \theta(x)\integrald V_{(x,P)}\\
  &\quad - \int\langle\theta(x),\weakd\gamma(\Psi\circ f(x))\circ\vweakd{V}{(\Psi\circ f)}(x)\rangle\integrald\mass{V}_x\\
	& = \delta W((\gamma\circ f))\theta)-\int\gamma(f(x))P_{\sharp}\cdot\weakd \theta(x)\integrald W_{(x,P)}\\
	&\quad -\int_E\langle\theta(x),\weakd\gamma(f(x))\circ\vweakd{V}{f}(x)\rangle\integrald\mass{V}_x\\
  & = \delta W((\gamma\circ f))\theta)-\int\gamma(f(x))P_{\sharp}\cdot\weakd \theta(x)\integrald W_{(x,P)}\\
	&\quad -\int\langle\theta(x),\weakd\gamma(f(x))\circ\vweakd{V}{f}(x)\rangle\integrald\mass{W}_x,
\end{aligned}
\end{equation*}
which shows that $f\in\vweakfunctions{W}{Y}$ and concludes the proof.
\end{proof}

\begin{lemma}\label{partition lemma}
Let $m,n,Q\in\pinteger$, $r,\lambda,q\in\real$, $U\subset\real^n$ be an open set and $Y$ be a finite dimensional normed vector space.
Suppose $m\leq n$, $q>m$ and $r>0$.
There exist constants $0<\varepsilon_2(\dimension{Y},n,m,Q)<\frac{1}{2}$, and $C_0(\dimension{Y},n,m,q,Q)>0$ with the following property.
If $V\in\varifolds_m(U)$ satisfies Hypothesis \ref{basic hypothesis}, $a\in\support{\mass{V}}$, $\closedball{a}{r}\subset U$, $f\in\vweakfunctions{V}{Y}$ and $0<\lambda\leq\frac{\varepsilon_2}{1+\varepsilon_2}$ satisfy
\begin{enumerate}[(i)]
\item $r^{-m}\mass{V}(\closedball{a}{r})\leq (Q+\frac{1}{4})\unitmeasure{m}$ and
\item $(\mass{V}\restrict\closedball{a}{r})_{(m)}(|\gmeancurvature(V)|)\leq \varepsilon_2$,
\end{enumerate}
then there exist a partition $\Pi$ of $V_{\lambda r}$ satisfying:
\begin{enumerate}
\item $\cardinality\Pi\leq Q$;
\item $f_{\lambda r}\in\vweakfunctions{W}{Y}$ for all $W\in\Pi$ and
\item For all $W\in\Pi$, $\support{(f_{\lambda r})_{\sharp}\mass{W}}$ is pairwise disjoint and
\begin{equation*}
\diam(\support{(f_{\lambda r})_{\sharp}\mass{W}})\leq \left\{
\begin{aligned}
 &C_0\mass{V_r}_{(1)}(\vweakd{V_r}{f_r}) , \text{ if } m=1\\
 &C_0\mass{V_r}_{(q)}(\vweakd{V_r}{f_r})r^{\mq} , \text{ if } m>1
\end{aligned}\right.
\end{equation*}
\end{enumerate}
\end{lemma}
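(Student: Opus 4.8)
The plan is the following. After translating so that $a=0$, write $g=f_{\lambda r}\in\vweakfunctions{V_{\lambda r}}{Y}$. The construction has three ingredients: (A) use Lemma \ref{separation lemma} to peel $V_{\lambda r}$ apart along ``gaps'' of the image of $g$, producing finitely many partitioned components on each of which $g$ is still generalised weakly differentiable and has connected essential image; (B) on each such component invoke the Sobolev--Poincar\'e inequality \cite{menne2016.1}*{10.7} to bound the oscillation of $g$, which is what yields the diameter estimate (3); and (C) cap the number of components at $Q$ by feeding hypotheses (i)--(ii) and the smallness of $\lambda$ to the monotonicity lemmas of Section 2. Accordingly I would first fix $\varepsilon_2\in(0,\tfrac12)$, depending only on $\dimension{Y},n,m,Q$, small enough that hypotheses (i)--(ii) supply the smallness of $\smcurvature{V}{a}{r}{q}$ (hence of $\smcurvature{V}{a}{\rho}{q}$ for all $\rho\le r$, this quantity being monotone in $\rho$) required by Lemmas \ref{monotonicity formula}, \ref{lower bound mass}, \ref{support of component contains a} and by the hypotheses of \cite{menne2016.1}*{10.7}, and --- writing $\delta$ for the resulting curvature bound --- small enough that
\begin{equation*}
\Bigl(Q+\tfrac14\Bigr)\,e^{\econstant\delta}<(Q+1)(1-\lambda)^{m}\qquad\text{whenever }0<\lambda\le\tfrac{\varepsilon_2}{1+\varepsilon_2};
\end{equation*}
this is possible because that constraint keeps both $\lambda$ and $\delta$ uniformly small in terms of $\varepsilon_2$ while the left side tends to $Q+\tfrac14$ and the right side to $Q+1$. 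I let $C_0(\dimension{Y},n,m,q,Q)$ be the constant of \cite{menne2016.1}*{10.7}, doubled to turn a radius into a diameter and multiplied by $Q^{1-1/q}$ to absorb chains of at most $Q$ clusters.

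For the construction I would work with $f_r\in\vweakfunctions{V_r}{Y}$ and the pushforward $(f_r)_\sharp\mass{V_r}$ on $Y$, and separate inductively: while some current piece has disconnected essential image, pick a bounded open $G\subset Y$ with $(f_r)_\sharp\mass{V_r}(G)=0$ splitting its image into a compact set $K$ and a closed set $D$ with positive-mass preimages, and apply Lemma \ref{separation lemma} with these $D,K$ to split off the component carried by $f_r^{-1}[K]$; the computation in the proof of Lemma \ref{separation lemma} shows the complementary piece is again a partitioned component of $V_r$ carrying $f_r$ generalised weakly differentiably, so the induction proceeds. By Lemma \ref{lower bound mass} with $\gamma=\lambda$, any partitioned component $W$ of $V_r$ with $\support{\mass{W}}\cap\openball{0}{\lambda r}\neq\emptyset$ satisfies $r^{-m}\mass{W}(\openball{0}{r})\ge e^{-\econstant\delta}(1-\lambda)^{m}\unitmeasure{m}$; summing this against $r^{-m}\mass{V}(\openball{0}{r})\le(Q+\tfrac14)\unitmeasure{m}$, which holds by Definition \ref{partition definition}(2) and (i), shows that at most $Q$ of the pieces produced meet $\openball{0}{\lambda r}$, so the separation stabilises after finitely many steps. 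Restricting these pieces to $\openball{0}{\lambda r}$ and discarding the null ones yields a finite collection $\Pi$; I would verify, using hypothesis (ii) to rule out thin necks, that the essential image of $g$ over each element of $\Pi$ is connected --- possibly after one further application of Lemma \ref{separation lemma} within $\openball{0}{\lambda r}$.

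I would then check $\Pi$ is a partition of $V_{\lambda r}$ by localising the distributional boundary to the sub-ball and using that $\mass{\delta V_r}_{\sing}=0$ forces the first variations to split through $\gmeancurvature$, as in Lemma \ref{inherited hypothesis}; conclusion (1) is the cardinality bound above, conclusion (2) is Lemma \ref{separation lemma}(2) together with the restriction property of $\vweakfunctions{\cdot}{Y}$ recorded in the Notation, and the disjointness in (3) holds because the clusters are pairwise disjoint by construction. For the diameter estimate, fix $W\in\Pi$: by Lemma \ref{inherited hypothesis} it satisfies Hypothesis \ref{basic hypothesis}, it carries $f_{\lambda r}$ generalised weakly differentiably, and its essential image lies in one cluster, which is a chain of at most $Q$ of the atoms above. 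Applying \cite{menne2016.1}*{10.7} on $\openball{0}{\lambda r}$ to $f_{\lambda r}$ on each atom of $W$ --- as the $\mathbf{L}^{m/(m-1)}$ Sobolev--Poincar\'e inequality, whose target exponent is $\infty$ when $m=1$, and, for $m>1$, in its Morrey-type $\mathbf{L}^\infty$ form available because $q>m$, with the mass ratio controlled by (i) --- gives
\begin{equation*}
\diam\bigl(\support{(f_{\lambda r})_\sharp\mass{W}}\bigr)\le
\begin{cases}
C_0\,\mass{V_{\lambda r}}_{(1)}(\vweakd{V_{\lambda r}}{f_{\lambda r}}),& m=1,\\[0.5ex]
C_0\,(\lambda r)^{\mq}\,\mass{V_{\lambda r}}_{(q)}(\vweakd{V_{\lambda r}}{f_{\lambda r}}),& m>1,
\end{cases}
\end{equation*}
and since $\lambda r\le r$ and $\mass{V_{\lambda r}}_{(p)}(\vweakd{V_{\lambda r}}{f_{\lambda r}})\le\mass{V_{r}}_{(p)}(\vweakd{V_{r}}{f_{r}})$ for $p\in\{1,q\}$, the right-hand side of (3) follows.

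The step I expect to be hardest is balancing the two opposing pressures on the pieces: preserving generalised weak differentiability forces them to be coarse --- so Lemma \ref{separation lemma}, rather than Menne's indecomposable decomposition, must be used --- whereas the oscillation estimate and the count want them fine and ``connected''. The reconciliation rests entirely on the smallness of $\lambda$, which lets Lemma \ref{lower bound mass} certify that every piece meeting $\openball{0}{\lambda r}$ still carries nearly a full unit of density at scale $r$ (capping their number at $Q$), and on hypothesis (ii) --- an $\mathbf{L}^m$ bound on $\gmeancurvature$ --- which is precisely what excludes the small highly-curved pieces and the thin necks that would otherwise break both the count and the diameter bound. Making the bookkeeping of the separation steps respect the scales $r$ and $\lambda r$ simultaneously, and extracting the $\mathbf{L}^\infty$ oscillation bound from the Sobolev--Poincar\'e inequality when $m>1$, are the two places that need genuine care.
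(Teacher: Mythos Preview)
Your proposal assembles the right ingredients --- Lemma \ref{separation lemma}, the Sobolev--Poincar\'e inequality \cite{menne2016.1}*{10.7}, and a mass lower bound --- but applies them in the wrong order, and this creates a genuine gap. You want to separate $V_r$ iteratively along gaps in the essential image of $f_r$ and argue termination because at most $Q$ resulting pieces can meet $\openball{a}{\lambda r}$. But the separation takes place in the image space $Y$, and nothing you have assumed prevents $(f_r)_\sharp\mass{V_r}$ from having infinitely many components: the lower mass bound from Lemma \ref{lower bound mass} caps the number of pieces that meet the \emph{smaller ball in $\real^n$}, not the number of connected components of the image. A single piece meeting $\openball{a}{\lambda r}$ could a priori have disconnected image requiring unboundedly many further separations, so ``stabilises after finitely many steps'' is not justified. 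Your parenthetical ``possibly after one further application of Lemma \ref{separation lemma} within $\openball{a}{\lambda r}$'' acknowledges but does not resolve the difficulty.

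The paper's proof reverses the order: it applies \cite{menne2016.1}*{10.7} \emph{once}, directly to $V_r$ on $\openball{a}{r}$ with inner set $\closedball{a}{\lambda r}$, and this already yields a finite set $\Upsilon\subset Y$ with $\cardinality\Upsilon\le Q$ (the cardinality bound is part of the conclusion of 10.7, driven by the mass hypothesis (i)) together with the $L^\infty$ estimate $\distance_\Upsilon\circ f_r\le\tau_m$ on $\closedball{a}{\lambda r}$. One then groups the balls $\closedballY{Y}{y}{\tau_m}$, $y\in\Upsilon$, into $l\le Q$ connected clusters and invokes Lemma \ref{separation lemma} exactly $l$ times to produce $\Pi$. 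The diameter bound $\diam K_i\le 2Q\tau_m$ is then immediate from the chain of at most $Q$ overlapping balls, with no need to apply 10.7 piecewise or to invoke Lemmas \ref{lower bound mass} or \ref{support of component contains a} at all in this lemma. Note also that hypothesis (ii) is an $L^m$ bound on $\gmeancurvature$, which is exactly what 10.7 requires (via $\varepsilon_2<\Gamma^{-1}$); it does not by itself supply smallness of $\smcurvature{V}{a}{r}{q}$ as you claim.
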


\begin{proof}
Pick $M > \sup\{\dimension{Y},n\}$ sufficiently large so that
$\frac{Q+\frac{1}{4}}{Q+\frac{1}{2}}< 1-M^{-1}$,
let $\Gamma>0$ be given by \cite{menne2016.1}*{Theorem 10.7} and choose $0<\varepsilon_2<\frac{1}{2}$ sufficiently small such that
$\frac{Q+\frac{1}{2}}{Q+1}<(1+\varepsilon_2)^{-m}$ and $\varepsilon_2<\Gamma^{-1}$.

We observe that
\begin{equation*}
\begin{aligned}
\mass{V_r}(\openball{a}{r}) & \leq \left(Q+\frac{1}{4}\right)\unitmeasure{m}r ^m\\
                            & = \left(\frac{Q+\frac{1}{4}}{Q+\frac{1}{2}}\right)\left(\frac{Q+\frac{1}{2}}{Q+1}\right)(Q+1)\unitmeasure{m}r^m\\
                            & < (1-M^{-1})(1+\varepsilon_2)^{-m}(Q+1)\unitmeasure{m}r^m\\
                            & \leq (1-M^{-1})(Q+1)\unitmeasure{m}\left((1-\lambda)r\right)^m
\end{aligned}
\end{equation*}
and $\mass{V_r}(\{x\in\openball{a}{r}:\density{m}{V}{x}<1\})=0$.

We may apply \cite{menne2016.1}*{Theorem 10.7} to $V_r\in\varifolds_m(\openball{a}{r})$ and $A_{(1-\lambda)r}=\{x\in\openball{a}{r}:\openball{x}{(1-\lambda)r}\subset\openball{a}{r}\}=\closedball{a}{\lambda r}$ to obtain a finite set $\Upsilon\subset Y$ such that
\begin{equation*}
\left(\mass{V_r}\restrict\closedball{a}{\lambda r}\right)_{(\infty)}(\distance_{\Upsilon}\circ f_r)\leq \tau_m,
\end{equation*}
where
\begin{equation*}
\tau_m = \left\{
\begin{aligned}
& \Gamma\mass{V_r}_{(1)}\left(\vweakd{V_r}{f_r}\right), \text{ if }m=1\\
& \Gamma^{\frac{m}{\mq}}(1-\lambda)^{\mq}\mass{V_r}_{(q)}\left(\vweakd{V_r}{f_r}\right), \text{ if }m>1.
\end{aligned}
\right.
\end{equation*}
That is, $\mass{V_r}\left(\closedball{a}{\lambda r}\setminus\preimage{f_r}{\closedballY{Y}{\Upsilon}{s}}\right)=0$ for all $s\geq \tau_m$ which further implies $\mass{V_{\lambda r}}\left(\openball{a}{\lambda r}\setminus\preimage{f_{\lambda r}}{\closedballY{Y}{\Upsilon}{s}}\right)=0$ for all $s\geq \tau_m$.

Next we write $\Upsilon=\Upsilon_1\cup\ldots\cup\Upsilon_l$ with $1\leq l\leq Q$ such that $\Upsilon_i\neq\emptyset$, $\closedballY{Y}{\Upsilon_i}{\tau_m}$ is connected and pairwise disjoint.
In particular, we may apply Lemma \ref{separation lemma} for each $D_i=\bigcup_{j\neq i}\closedballY{Y}{\Upsilon_j}{\tau_m}$ and $K_i=\closedballY{Y}{\Upsilon_i}{\tau_m}$ to obtain $W_i\in\varifolds(\openball{a}{\lambda r})$ satisfying \ref{partition definition} with respect to $V_{\lambda r}$ satisfying:
\begin{enumerate}[(a)]
\item $\mass{W_i}(\openball{a}{\lambda r}\setminus\preimage{f_{\lambda r}}{K_i})=0$ and
\item $f_{\lambda r}\in\vweakfunctions{W_i}{Y}$.
\end{enumerate}
In particular, $\support{\mass{W_i}}\subset \preimage{f_{\lambda r}}{K_i}$.

It follows that $\Pi=\{W_i:i=1,\ldots l\}$ is a partition of $V_{\lambda r}$ and
\begin{equation*}
\diam(\support{(f_{\lambda r})_{\sharp}\mass{W_i}}) \leq \diam(K_i) \leq 2Q\tau_m,
\end{equation*}
which concludes the proof with
\begin{equation*}
C_0 = \left\{
\begin{aligned}
& 2Q\Gamma, \text{ if }m=1\\
& 2Q\Gamma^{\frac{m}{\mq}}, \text{ if }m>1.
\end{aligned}
\right.
\end{equation*}

\end{proof}

\begin{theorem}\label{partition theorem}
Let $m,n,Q\in\pinteger$, $r,\lambda,q\in\real$, $U\subset\real^n$ be an open set, $V\in\varifolds_m(U)$ be as in Hypothesis \ref{basic hypothesis}, $a\in\support{\mass{V}}$, $Y$ be a finite dimensional normed vector space and $f\in\vweakfunctions{V}{Y}$.
Suppose $m\leq n$, $q>m$, $r>0$ and let $\varepsilon_2,C_0>0$ be as in Lemma \ref{partition lemma}.
If $\closedball{a}{r}\subset U$, $0<\lambda\leq \frac{\varepsilon_2}{1+\varepsilon_2}$, $r_k=\lambda^k r$ for each $k\in\pinteger$ and $V$ satisfy:
\begin{enumerate}[(i)]
\item $t^{-m}\mass{V}(\closedball{a}{t})\leq (Q+\frac{1}{4})\unitmeasure{m}$ for all $0<t\leq r$ and
\item $\totalcurvature{V}{\closedball{a}{r}}{m}\leq \varepsilon_2$,
\end{enumerate}
then there exist partitions $\Pi_k$ of $V_{r_k}$ for each $k\in\pinteger$ satisfying:
\begin{enumerate}
\item $f_{r_k}\in\vweakfunctions{W}{Y}$ for all $W\in\Pi_k$;
\item For all $W\in\Pi_k$, $\support{(f_{r_k})_\sharp\mass{W}}$ is pairwise disjoint and
\begin{equation*}
\diam(\support{(f_{r_k})_\sharp\mass{W}}) \leq \left\{
\begin{aligned}
 &C_0\mass{V_{r_{k-1}}}_{(1)}(\vweakd{V_{r_{k-1}}}{f_{r_{k-1}}}) , m=1\\
 &C_0\mass{V_{r_{k-1}}}_{(q)}(\vweakd{V_{r_{k-1}}}{f_{r_{k-1}}})r_k^{\mq} , m>1;
\end{aligned}\right.
\end{equation*}
\item For each $Z\in\Pi_{k+1}$ there exists $W\in\Pi_k'$ such that $Z$ is an element of a partition of $W|\parts{\openball{a}{r_{k+1}}\times\grassmannian{n}{m}}$.
\end{enumerate}

\end{theorem}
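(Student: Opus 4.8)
The plan is to argue by induction on $k$: the base case $k=1$ is a direct application of Lemma \ref{partition lemma}, and the inductive step is carried out by running the machinery behind that lemma --- one invocation of the Sobolev--Poincar\'e inequality \cite{menne2016.1}*{Theorem 10.7} at the coarser scale $r_k$, followed by separate invocations of Lemma \ref{separation lemma} --- on each component of $\Pi_k'$ rather than on $V_{r_k}$ as a whole. Two preliminary remarks set the stage. Since $\lambda\le\frac{\varepsilon_2}{1+\varepsilon_2}<\frac12$ we have $\openball{a}{r_{k+1}}\subset\openball{a}{\frac12 r_k}$, so any $W\in\Pi_k$ with $\mass{W}(\openball{a}{r_{k+1}})>0$ already belongs to $\Pi_k'$. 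Moreover, hypotheses (i) and (ii) of the theorem are precisely hypotheses (i) and (ii) of Lemma \ref{partition lemma} for $V$ and the ball $\closedball{a}{t}$ for \emph{every} $0<t\le r$ --- for (ii) because $(\mass{V}\restrict\closedball{a}{t})_{(m)}(|\gmeancurvature(V)|)$ is nondecreasing in $t$ --- so all the mass and density inputs needed to invoke \cite{menne2016.1}*{Theorem 10.7} for $V_{r_k}$ with inner region $\closedball{a}{\lambda r_k}=\closedball{a}{r_{k+1}}$ are available, exactly as in the proof of Lemma \ref{partition lemma} with $r$ there replaced by $r_k$. Finally, by the remark following Hypothesis \ref{basic hypothesis} and by Lemma \ref{inherited hypothesis}, every component of every partition appearing below inherits Hypothesis \ref{basic hypothesis} on its domain, so its first variation has vanishing singular part, as Lemma \ref{separation lemma} requires.

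For $k=1$ I apply Lemma \ref{partition lemma} to $V$, $\closedball{a}{r}$, $f$ and $\lambda$ to obtain a partition $\Pi_1$ of $V_{\lambda r}=V_{r_1}$ which, by the conclusions of that lemma, satisfies conclusions (1) and (2) of the theorem with $r_0=r$ and has pairwise disjoint images $\support{(f_{r_1})_{\sharp}\mass{W}}$; this last property is the strengthened inductive hypothesis I carry along. For the inductive step, given a partition $\Pi_k$ of $V_{r_k}$ with $f_{r_k}\in\vweakfunctions{W}{Y}$ for all $W\in\Pi_k$ and with pairwise disjoint images $\support{(f_{r_k})_{\sharp}\mass{W}}$, I first apply \cite{menne2016.1}*{Theorem 10.7} to $V_{r_k}$ and $\closedball{a}{r_{k+1}}$ exactly as in the proof of Lemma \ref{partition lemma} (with $r\mapsto r_k$), getting a finite $\Upsilon\subset Y$ with $\cardinality{\Upsilon}\le Q$ and $(\mass{V_{r_k}}\restrict\closedball{a}{r_{k+1}})_{(\infty)}(\distance_{\Upsilon}\circ f_{r_k})\le\tau$, where $\tau$ is the constant $\tau_m$ of that proof at scale $r_k$; hence $\mass{V_{r_{k+1}}}(\openball{a}{r_{k+1}}\setminus\preimage{f_{r_{k+1}}}{\closedballY{Y}{\Upsilon}{\tau}})=0$. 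Writing $\Upsilon=\Upsilon_1\cup\ldots\cup\Upsilon_l$ with each $\Upsilon_i\ne\emptyset$ and the sets $K_i=\closedballY{Y}{\Upsilon_i}{\tau}$ connected and pairwise disjoint, and putting $\Pi_k''=\{W\in\Pi_k:\mass{W}(\openball{a}{r_{k+1}})>0\}\subset\Pi_k'$, I note that $\{W|\parts{\openball{a}{r_{k+1}}\times\grassmannian{n}{m}}:W\in\Pi_k''\}$ is a partition of $V_{r_{k+1}}$ (a straightforward consequence of $\Pi_k$ being a partition of $V_{r_k}$ and $\openball{a}{r_{k+1}}\subset\openball{a}{r_k}$). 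For each $W\in\Pi_k''$ write $\tilde W=W|\parts{\openball{a}{r_{k+1}}\times\grassmannian{n}{m}}$; then $f_{r_{k+1}}\in\vweakfunctions{\tilde W}{Y}$ and $\mass{\tilde W}(\openball{a}{r_{k+1}}\setminus\preimage{f_{r_{k+1}}}{\closedballY{Y}{\Upsilon}{\tau}})=0$, so for every $i$ with $\mass{\tilde W}(\preimage{f_{r_{k+1}}}{K_i})>0$, Lemma \ref{separation lemma} applied to $\tilde W$, $f_{r_{k+1}}$, $D=\bigcup_{j\ne i}K_j$ and $K=K_i$ yields a partitioned component $Z_{W,i}$ of $\tilde W$ with $\mass{Z_{W,i}}(\openball{a}{r_{k+1}}\setminus\preimage{f_{r_{k+1}}}{K_i})=0$ and $f_{r_{k+1}}\in\vweakfunctions{Z_{W,i}}{Y}$. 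Exactly as at the end of the proof of Lemma \ref{partition lemma}, $\{Z_{W,i}\}_i$ is a partition of $\tilde W$; therefore the collection $\Pi_{k+1}$ of all these $Z_{W,i}$ is a partition of $V_{r_{k+1}}$.

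The three conclusions now follow for $\Pi_{k+1}$. Conclusion (1) is Lemma \ref{separation lemma}(2). Conclusion (3) holds by construction, since $Z_{W,i}$ is an element of the partition $\{Z_{W,j}\}_j$ of $W|\parts{\openball{a}{r_{k+1}}\times\grassmannian{n}{m}}$ with $W\in\Pi_k''\subset\Pi_k'$. For conclusion (2), one has $\support{(f_{r_{k+1}})_{\sharp}\mass{Z_{W,i}}}\subset K_i\cap\support{(f_{r_k})_{\sharp}\mass{W}}$, so two distinct such images are disjoint --- because $K_i\cap K_{i'}=\emptyset$ when $i\ne i'$, and because $\support{(f_{r_k})_{\sharp}\mass{W}}\cap\support{(f_{r_k})_{\sharp}\mass{W'}}=\emptyset$ by the inductive hypothesis when $i=i'$ and $W\ne W'$ --- which also reinstates the inductive hypothesis at level $k+1$; and $\diam(\support{(f_{r_{k+1}})_{\sharp}\mass{Z_{W,i}}})\le\diam(K_i)\le 2Q\tau$, which yields the bound of conclusion (2) by the constant-tracking of the proof of Lemma \ref{partition lemma} at scale $r_k$ together with the monotonicity $\mass{V_{r_k}}_{(q)}(\vweakd{V_{r_k}}{f_{r_k}})\le\mass{V_{r_{k-1}}}_{(q)}(\vweakd{V_{r_{k-1}}}{f_{r_{k-1}}})$ (and its analogue for the $(1)$-seminorm when $m=1$) under restriction to the smaller concentric ball.

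The main obstacle is the inductive step, and within it the point that Lemma \ref{partition lemma} cannot be applied directly to a component $W\in\Pi_k'$: the domain of $W$ is $\openball{a}{r_k}$, which does not contain $\closedball{a}{r_k}$, so there is no room to run the Sobolev--Poincar\'e inequality at the scale needed to reach $\openball{a}{r_{k+1}}$ inside $W$. Invoking \cite{menne2016.1}*{Theorem 10.7} once for $V_{r_k}$ and then separating each $W\in\Pi_k''$ by Lemma \ref{separation lemma} against the \emph{same} clusters $K_i$ circumvents this, at the cost that a single cluster may be shared by components descending from different $W$; this is precisely why pairwise disjointness of the images must be built into the induction. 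What remains is bookkeeping --- a partitioned component (in the sense of Definition \ref{partition definition}) of a partitioned component is a partitioned component, and the restriction of a partition to a concentric smaller ball is a partition --- which follows from the additivity of the distributional boundary and from Hypothesis \ref{basic hypothesis}.
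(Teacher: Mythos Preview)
Your proof is correct and follows the same inductive strategy as the paper, which at each step simply applies Lemma \ref{partition lemma} directly to every $W\in\Pi_k'$ rather than unpacking it into one Sobolev--Poincar\'e application on $V_{r_k}$ followed by Lemma \ref{separation lemma} on each $\tilde W$; the closed-ball obstacle you flag is only formal, since the proof of Lemma \ref{partition lemma} uses only $V_r$ on the open ball. Your explicit induction on the pairwise disjointness of the images is a point the paper leaves to the reader, and the monotonicity $\mass{V_{r_k}}_{(q)}(\cdot)\le\mass{V_{r_{k-1}}}_{(q)}(\cdot)$ you invoke at the end is correct but superfluous, as conclusion (2) at level $k+1$ already refers to the seminorm at scale $r_k$.
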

\begin{proof}
We proceed by induction.
For the sake of notation we put $\Pi_0=\{V_r\}$.
Let $\Pi_1$ be the partition given by Lemma \ref{partition lemma} applied to $V_r$.
Now, suppose $\Pi_k$ is defined for $k>0$ so that it satisfies properties $(1)$-$(3)$.
Observe that $\tilde{\Pi}_k=\{W_{r_{k+1}}:W\in\Pi_k'\text{ and }W_{r_{k+1}}\neq 0\}$ defines a partition of $V_{r_{k+1}}$ and each $W\in\Pi_k'$ satisfy the conditions of Lemma \ref{partition lemma}.
Therefore we can construct $\Pi(W_{r_{k+1}})$ satisfying conditions (1) and (2).
If we let $\Pi_{k+1}=\cup_{W\in\tilde{\Pi}_k}\Pi(W_{r_{k+1}})$ then condition (3) follows trivially.
\end{proof}

\begin{corollary}\label{partition corollary}
Let $m,n,Q\in\pinteger$, $r,q,\sigma,\lambda\in\real$, $U\subset\real^n$ be an open set, $V\in\varifolds_m(U)$ be as in Hypothesis \ref{basic hypothesis}, $a\in\support{\mass{V}}$, $Y$ be a finite dimensional normed vector space and $f\in\vweakfunctions{V}{Y}$.
Suppose $m\leq n$, $q>m$, $\sigma,r>0$ and $\lambda\in(0,1)$.

There exist positive constants $C_1(\dimension{Y},n,m,q,Q)$, $\varepsilon_3(m,Q) $, $\varepsilon_4(\dimension{Y},n,
m,$ $q,Q)<\frac{1}{2}$ and $\Upsilon\subset Y$ with $\cardinality{\Upsilon}\leq Q$ and the following property.
If $\lambda\leq \frac{\varepsilon_4}{1+\varepsilon_4}$, $\closedball{a}{r}\subset U$, $\density{m}{\mass{V}}{a}= Q$ and $V$ satisfy
\begin{enumerate}[(i)]
\item $r^{-m}\mass{V}(\closedball{a}{r})\leq (Q+\varepsilon_3)\unitmeasure{m}$,
\item $\smcurvature{V}{a}{r}{q}\leq\varepsilon_4$ and
\item $r^\mq(\mass{V}\restrict\closedball{a}{r})_{(q)}(|\vweakd{V}{f}|) \leq \sigma$,
\end{enumerate}
then for each $k\in\pinteger$ and $r_k=\lambda^kr$, there exists a partition $\Pi_k\subset\varifolds_m(\openball{a}{r_k})$ of $V_{r_k}$ such that
\begin{enumerate}
\item $\cardinality{\Pi_k'}\leq Q$;
\item $f_{r_k}\in\vweakfunctions{W}{Y}$ for each $W\in\Pi_k$ and
\item For $\mass{V}$-almost every $x\in\openball{a}{r_k}$ we have
\begin{equation*}
\distance^Y(f(x),\Upsilon)\leq C_1\sigma\left(\frac{|x-a|}{\lambda r}\right)^{\mq}.
\end{equation*}
\end{enumerate}
\end{corollary}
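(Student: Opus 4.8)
The plan is to feed the hypotheses into Theorem~\ref{partition theorem} and Lemma~\ref{support of component contains a}, and then to build $\Upsilon$ out of the nested components those results produce.

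\emph{Step 1 (reduction to the earlier results).} Let $\varepsilon_0,\varepsilon_1$ be as in Lemma~\ref{support of component contains a} and $\varepsilon_2,C_0$ as in Lemma~\ref{partition lemma}. Set $\varepsilon_3:=\min\{\varepsilon_0,\tfrac18\}$ and choose $\varepsilon_4\in(0,\tfrac12)$ so small that $\econstant\varepsilon_4\le1$, $\varepsilon_4\le\min\{\varepsilon_1,\varepsilon_2\}$, $e^{\econstant\varepsilon_4}(Q+\varepsilon_3)\le\min\{Q+\tfrac14,\,Q+\varepsilon_0\}$, and such that H\"older's inequality together with the mass bound from (i) turns (ii) into $\totalcurvature{V}{\closedball{a}{r}}{m}\le\varepsilon_2$. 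By the monotonicity formula (Lemma~\ref{monotonicity formula}) with $\delta=\varepsilon_4$, hypotheses (i)--(ii) propagate to $t^{-m}\mass{V}(\closedball{a}{t})\le(Q+\tfrac14)\unitmeasure{m}$ and $\smcurvature{V}{a}{t}{q}\le\varepsilon_4$ for all $0<t\le r$; since also $\lambda\le\varepsilon_4/(1+\varepsilon_4)\le\varepsilon_2/(1+\varepsilon_2)$, Theorem~\ref{partition theorem} applies and produces partitions $\Pi_k$ of $V_{r_k}$, $k\in\pinteger$, with its conclusions (1)--(3); conclusion~(2) of the corollary is Theorem~\ref{partition theorem}(1). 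As each $W\in\Pi_k$ satisfies \ref{partition definition}(1) with respect to $V_{r_k}=(V)_{r_k}$, while $r_k^{-m}\mass{V}(\closedball{a}{r_k})\le(Q+\varepsilon_0)\unitmeasure{m}$, $\smcurvature{V}{a}{r_k}{q}\le\varepsilon_1$ and $\density{m}{\mass{V}}{a}=Q$, Lemma~\ref{support of component contains a} at scale $r_k$ gives conclusion~(1), $\cardinality{\Pi_k'}\le Q$, as well as $a\in\support{\mass{W}}$ for every $W\in\Pi_k'$.

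\emph{Step 2 (the set $\Upsilon$).} By Theorem~\ref{partition theorem}(3) and the construction in its proof, every $W\in\Pi_k$ ($k\ge1$) is a partition element of $W'|\parts{\openball{a}{r_k}\times\grassmannian{n}{m}}$ for a unique $W'\in\Pi_{k-1}'$; this turns $\bigcup_k\Pi_k'$ into a rooted tree $\mathcal{T}$ with root $V_r$, at most $Q$ vertices at every level (Step~1), and at most $Q$ children per vertex (Lemma~\ref{partition lemma}(1)). Since $a\in\support{\mass{W}}$ for $W\in\Pi_k'$, at least one child of $W$ contains $a$ in its support and hence lies in $\Pi_{k+1}'$, so every vertex lies on an infinite branch. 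For a level-$k$ vertex $W$ put $S(W):=\support{(f_{r_k})_\sharp\mass{W}}$; if $W'$ is a child of $W$ then $\mass{W'}\le\mass{W}\restrict\openball{a}{r_{k+1}}$ and $f_{r_{k+1}}$ restricts $f_{r_k}$, so $(f_{r_{k+1}})_\sharp\mass{W'}\le(f_{r_k})_\sharp\mass{W}$ and $S(W')\subset S(W)$, and Theorem~\ref{partition theorem}(2) — after rewriting (iii) as $\mass{V_{r_{k-1}}}_{(q)}(\vweakd{V_{r_{k-1}}}{f_{r_{k-1}}})\le\sigma r^{-\mq}$, and, for $m=1$, invoking H\"older's inequality with the mass bound of Step~1 — yields $\diam S(W)\le C_0'\sigma\lambda^{(k-1)\mq}$, and even $\diam S(W)\le C_0\sigma\lambda^{k\mq}$ when $m>1$. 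Thus along any infinite branch $W_0\supset W_1\supset\cdots$ the closed sets $S(W_0)\supset S(W_1)\supset\cdots$ shrink to a single point; let $\Upsilon$ be the set of these points. There are at most $Q$ infinite branches (if there were $Q+1$ they would occupy $Q+1$ distinct vertices at every sufficiently deep level), so $\cardinality{\Upsilon}\le Q$.

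\emph{Step 3 (the decay and conclusion).} Fix $k$. For $\mass{V}$-a.e.\ $x\in\openball{a}{r_k}$ we have $\density{m}{\mass{V}}{x}\ge1$, $x\ne a$, and, for every $j\ge k$, a unique $W_j(x)\in\Pi_j$ with $x$ in its defining set, $W_{j+1}(x)$ a child of $W_j(x)$, and $f(x)\in S(W_j(x))$ (since $\mass{W}(\preimage{f_{r_j}}{Y\setminus S(W)})=0$ for each $W\in\Pi_j$). Fix such an $x$ and let $j\ge k$ with $r_{j+1}\le|x-a|<r_j$. If $|x-a|<\tfrac12r_j$, then $W_j(x)\in\Pi_j'$ is a vertex of $\mathcal{T}$; letting $v\in\Upsilon$ be the limit point of a branch through it, $v\in S(W_j(x))\ni f(x)$, so for $m>1$
\begin{equation*}
\distance^Y(f(x),\Upsilon)\le\diam S(W_j(x))\le C_0\sigma\lambda^{j\mq}\le C_0\sigma\Bigl(\tfrac{|x-a|}{\lambda r}\Bigr)^{\mq},
\end{equation*}
the final step because $|x-a|\ge r_{j+1}=\lambda^{j+1}r$ — which is exactly why $\lambda$ appears in the denominator. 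If $\tfrac12r_j\le|x-a|<r_j$, then $|x-a|<r_j=\lambda r_{j-1}<\tfrac12r_{j-1}$, so $W_{j-1}(x)\in\Pi_{j-1}'$ is a vertex, and $f(x)\in S(W_j(x))\subset S(W_{j-1}(x))$ with $|x-a|\ge\tfrac12r_j$ gives $\distance^Y(f(x),\Upsilon)\le2^{\mq}C_0\sigma(|x-a|/(\lambda r))^{\mq}$. The residual case $j=1$, $\tfrac12r_1\le|x-a|<r_1$ (only when $k=1$) is treated separately: there $(|x-a|/(\lambda r))^{\mq}\ge2^{-\mq}$ while a single use of the Sobolev--Poincar\'e inequality on $V_r$ over $\openball{a}{r}$ bounds $\distance^Y(f(x),\Upsilon)$ by a fixed multiple of $\sigma$. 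Choosing $C_1(\dimension{Y},n,m,q,Q)$ large enough to absorb these constants, and making the analogous modifications when $m=1$, proves (3). The routine parts — the H\"older estimates, the genericity guaranteeing that $W_j(x)$ and $f(x)\in S(W_j(x))$ are defined for a.e.\ $x$, and the peripheral cases $m=1$ and $\tfrac12r_1\le|x-a|<r_1$ — carry most of the elementary work; the crux, and the main obstacle, is Step~2, i.e.\ distilling the ever-growing family of partitions into one finite set $\Upsilon$ with $\cardinality{\Upsilon}\le Q$ that records all of the $\lambda^{j\mq}$-decay, which relies at once on the nesting Theorem~\ref{partition theorem}(3), the diameter bound Theorem~\ref{partition theorem}(2), and the cardinality control of Lemma~\ref{support of component contains a}.
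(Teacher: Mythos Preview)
Your proof takes essentially the same route as the paper's: reduce to Theorem~\ref{partition theorem} and Lemma~\ref{support of component contains a}, define $\Upsilon$ as the limit points of the nested chains in $\bigcup_k\Pi_k'$ (your tree-and-infinite-branches formulation is equivalent to the paper's observation that $\cardinality{\Pi_k'}$ is non-decreasing and hence eventually constant, so that for $k\ge k_0$ the map $\Pi_{k+1}'\to\Pi_k'$ is a bijection), and extract the decay from the diameter bounds of Theorem~\ref{partition theorem}(2). Your case split on whether $|x-a|<\tfrac12 r_j$ plays the same role as the paper's split into $k\ge k_0$ versus $k'\le k_0$, and both arguments treat the outermost annulus $\tfrac12 r_1\le|x-a|<r_1$ somewhat loosely.
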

\begin{proof}
We begin by letting $\varepsilon_0(m,Q),\varepsilon_1(n,m,q,Q)$ be given by Lemma \ref{support of component contains a} and $\varepsilon_2(\dimension{Y},n,m,Q)$ be given by Lemma \ref{partition lemma}.
Now fix positive constants $\varepsilon_3(m,Q)$ and $\varepsilon_4(\dimension{Y},n,m,q,Q)$ sufficiently small such that $\varepsilon_3<\min\{\frac{1}{8},\varepsilon_0\}$, $e^{\econstant\varepsilon_4}(Q+\frac{1}{8})\leq Q+\frac{1}{4}$, $\varepsilon_4<\varepsilon_1$ and $\varepsilon_4\left((Q+\frac{1}{8})\unitmeasure{m}\right)^{\frac{\mq}{m}}<\varepsilon_2$.

Firstly, it follows from the Monotonicity Formula that for all $0<t\leq r$
\begin{equation*}
\begin{aligned}
t^{-m}\mass{V}(\closedball{a}{t}) &\leq e^{\econstant\varepsilon_4}r^{-m}\mass{V}(\closedball{a}{r})\\
                                  &\leq e^{\econstant\varepsilon_4}(Q+\varepsilon_3)\unitmeasure{m}\\
																	&\leq (Q+\frac{1}{4})\unitmeasure{m}.
\end{aligned}
\end{equation*}
Secondly, it follows from H\"older inequality that
\begin{equation*}
\begin{aligned}
\totalcurvature{V}{\closedball{a}{r}}{m} & \leq \totalcurvature{V}{\closedball{a}{r}}{q}\mass{V}(\closedball{a}{r})^{\frac{\mq}{m}}\\
                                         & \leq \varepsilon_4\left((Q+\varepsilon_3)\unitmeasure{m}\right)^{\frac{\mq}{m}}\\
																				 & \leq \varepsilon_2.
\end{aligned}
\end{equation*}
We then may apply Theorem \ref{partition theorem} to obtain partitions $\Pi_k$ of $V_{r_{k}}$ so that $(2)$ is satisfied and $(1)$ follows from Lemma \ref{support of component contains a}.

Let us first prove $(3)$ for sufficiently large $k$.
For each $Z\in\Pi_{k+1}'$ it follows from \ref{partition theorem}(3) that there exists $W\in\Pi_k'$ such that $Z$ belongs to a partition of $W|\parts{\openball{a}{r_{k+1}}\times\grassmannian{n}{m}}$.
Now observe that Lemma \ref{support of component contains a} implies that for every $W\in\Pi_k'$ there exists at least one $Z\in\Pi_{k+1}'$ such that $\support{W}\cap\support{Z}\neq\emptyset$.
Hence, the above map from $\Pi_{k+1}'$ to $\Pi_{k}'$ is surjective, that is, $\cardinality\Pi_{k+1}'\geq\cardinality\Pi_{k}'$.
Since $\cardinality\Pi_{k}'\leq Q$, then $l=\cardinality\Pi_{k}'$ is eventually constant.
Let $k_0\in\pinteger$ be the first positive integer so that $l=\cardinality\Pi_{k}'$ is constant for all $k\geq k_0$.
For each $k\geq k_0$ we write $\Pi_k'=\{W^k_1,\ldots,W^k_l\}$ ordered so that $W^{k+1}_i$ is an element of a partition of $W^k_i|\parts{\openball{a}{r_{k+1}}\times\grassmannian{n}{m}}$ for each $i=1,\ldots,l$.

We note that when $m=1$, H\"older inequality implies that for all $0<t\leq r$
\begin{equation*}
\begin{aligned}
(\mass{V}\restrict\openball{a}{t})_{(1)}(|\vweakd{V}{f}|) & \leq (\mass{V}\restrict\openball{a}{t})_{(q)}(|\vweakd{V}{f}|)\mass{V}(\closedball{a}{t})^\mq\\
                                                          & \leq \sigma \left((Q+\frac{1}{4})\unitmeasure{m}\right)^\mq r^{-\mq}t^\mq.  
\end{aligned}
\end{equation*}

It follows from \ref{partition theorem}(2) that $\diam(\support{(f_{r_k})_\sharp\mass{W_k^i}}) \leq C_1\sigma  r^{-\mq}r_k^{\mq}$, where $C_1=C_0$ when $m>1$, $C_1=\left((Q+\frac{1}{4})\unitmeasure{m}\right)^\mq C_0$ when $m=1$ and $C_0$ is given by Theorem \ref{partition theorem}.
Since $\support{(f_{r_{k+1}})_\sharp\mass{W^{k+1}_i}}\subset\support{(f_{r_k})_{\sharp}\mass{W^k_i}}$, there exists $y_i\in Y$ such that $\cap_{k=k_0}^\infty\support{(f_{r_k})_\sharp\mass{W^k_i}}=\{y_i\}$.
Hence, for $\mass{W^k_i}$-almost every $x\in\openball{a}{r_k}\setminus\closedball{a}{r_{k+1}}$ we have

\begin{equation*}
\distance^Y(f(x),y_i)\leq C_1\sigma r^{-\mq}r_k^{\mq}\leq C_1\sigma \lambda^{-\mq} r^{-\mq}r_{k+1}^\mq\leq C_1\sigma(\lambda r)^{-\mq}|x-a|^\mq.
\end{equation*}
Since $\mass{W_i^{k+1}}\leq \mass{W_i^{k}}$, the above holds for $\mass{W_i^k}$-almost every $x\in\openball{a}{r_k}$.

Finally, it remains to prove condition (3) for all $k'\leq k_0$.
Let $k'\in\pinteger$ with $k'\leq k_0$ and $W\in\Pi_{k'}$.
It follows from \ref{partition theorem}(3) that there exists $W_{i'}^{k_0}\in\Pi_{k_0}$ such that $\support{\mass{W_{i'}^{k_0}}}\subset\support{\mass{W}}$.
In particular $\support{(f_{r_{k_0}})_\sharp\mass{W_{i'}^{k_0}}}\subset\support{(f_{r_{k'}})_\sharp\mass{W}}$ and $y_{i'}\in\support{(f_{r_{k'}})_\sharp\mass{W}}$
Hence, from \ref{partition theorem}(2) we have
\begin{equation*}
\begin{aligned}
\distance^Y(f(x),\Upsilon) & \leq\distance^Y(f(x),y_{i'})\leq C_1\sigma \left(\frac{r_{k'}}{r}\right)^{\mq}\leq C_1\sigma \left(\frac{r_{k'+1}}{\lambda r}\right)^\mq\\
                           & \leq C_1\sigma \left(\frac{|x-a|}{\lambda r}\right)^\mq
\end{aligned}
\end{equation*}
for $\mass{W}$-almost every $x\in\openball{a}{r_{k'}}\setminus\closedball{a}{r_{k'+1}}$, which concludes the proof.
\end{proof}

\section{Lipschitz Approximation}

\noindent\textbf{Notation.} 
Henceforth we fix $Y=\{A\in\homom{\real^n}{\real^n}:A^*=A\}$ and for $P\in\grassmannian{n}{m}$ we write $P_{\sharp}\in Y$ its corresponding orthogonal projection.
Given $a\in\real^n$ and $s,r>0$ we write $\planeopenball{P}{a}{r}=\{x\in\real^n:|P_{\sharp}(x-a)|\leq r\}$ and similarly $\planeclosedball{P}{a}{r}$.
We also define the truncated cylinder with base $P$ centered at $a$ with radius $r$ and height $s$ as
\begin{equation*}
\tcylinder{P}{a}{r}{s}=\{x\in\real^n:|P_\sharp(x-a)|< r \text{ and } |P^\perp_\sharp(x-a)|< s\}.
\end{equation*}

\begin{definition}
Let $n,m\in\pinteger$ with $m\leq n$ and $U\subset \real^n$ an open set.
Given $V\in\ivarifolds_m(U)$ we write $T_V=\{x\in U:\tangentmap{m}{\mass{V}}{x}\in\grassmannian{n}{m}\}$ and $\tmap{V}:T_V\rightarrow Y$ with $\tmap{V}(x)=\tangentmap{m}{\mass{V}}{x}_\sharp$.

We say that $V$ is a curvature varifold if $\mass{\delta V}$ is a Radon measure absolutely continuous with respect to $\mass{V}$ and $\tmap{V}\in\vweakfunctions{V}{Y}$, that is, $\tmap{V}$ is generalised $V$ weakly differentiable.
Given $x\in T_V$, we write $\gsff{V}(x)=\vweakd{V}{\tmap{V}}(x)$ and call it the generalised second fundamental form of $V$ at $x$. 
\end{definition}

\begin{remark}
It follows from \cite{menne2016.1}*{Theorem 15.6} that this definition is equivalent to the original definition by Hutchinson \cite{hutchinson1986}*{Definition 5.2.1}.
Furthermore,  $\gmeancurvature(V,x)=\trace{\tmap{V}(x)}(\gsff{V}(x))$ for $\mass{V}$-almost every $x\in U$.
\end{remark}

We begin by discussing the structure theorem for curvature varifolds with null second fundamental form.
The original statement \cite{hutchinson1986.2}*{Corollary to Theorem 3.4} is incorrect in two directions.
Firstly, the global statement is not true as one may consider $U=\openball{0}{3}\setminus\closedball{0}{1}$ in $\real^2$ and $V$ is the varifold such that $\mass{V}=\hausdorff^1\restrict(1,3)\times\{0\}$ so that $V$ is not given by a line restricted to $U$.
Secondly, the finiteness of the number of planes is also false without further assumptions.
As a counter example we consider $U=\openball{0}{1}$ in $\real^2$ and $V$ given by a sequence of lines accumulating at $\{1\}\times\real$.
We can even choose such lines so that the mass of $V$ is finite.
Nevertheless, the local statement remains true and in fact it can be proved independently of the above results.

\begin{figure}[H]
\centering
\begin{subfigure}{.5\textwidth}
  \centering
  \begin{tikzpicture}[scale=0.8]
\draw[dashed]  (0,0) node (v1) {} circle (3);
\draw[blue,very thick] (1,0) -- (3,0);
\draw[dashed]  (v1) circle (1);

\node [label={[label distance = 0.1 cm]90:$V$}] at (2,0) {};

\end{tikzpicture}
  \caption*{Null second fundamental form but\\ not a plane restricted to $U$.}
  \label{fig2:sub1}
\end{subfigure}%
\begin{subfigure}{.5\textwidth}
  \centering
  \begin{tikzpicture}[scale=0.8]
\draw[dashed]  (0,0) node (v1) {} circle (3);
\draw[blue,very thick] (0,-3) -- (0,3);

\clip (0,0) circle (3);
\pgfmathsetmacro{\i}{0}
\pgfmathsetmacro{\j}{0}
\foreach \j in {1,...,5} 
    \draw[blue,very thick] (3-2/\j,-3) -- (3-2/\j,3);
    
\draw[dotted, blue, thick] (2.5,0) -- (3,0);

\node [label={[label distance = 0.1 cm]180:$V$}] at (0,0) {};

\end{tikzpicture}
  \caption*{Finite density and mass in $U$ but not given by finitely many planes.}
  \label{fig2:sub2}
\end{subfigure}
\label{fig2:test}
\end{figure}

These examples illustrate that one should consider connected components of a decomposition of $V$ and density assumptions in order to write up an appropriate global statement.

\begin{theorem}\label{structure theorem}
Let $n,m\in\pinteger$ with $m\leq n$, $U\subset \real^n$ an open set and $V\in\ivarifolds_m(U)$ be a curvature varifold.
If $\gsff{V}(x)=0$ for $\mass{V}$-almost every $x\in U$, then for each $a\in U$ there exists $r>0$, $N\in\pinteger$, $\{k_1,\ldots,k_N\}\subset\pinteger$ and $A_1,\ldots,A_N$ affine planes such that
\begin{equation*}
V\restrict\openball{a}{r}=\sum_{j=1}^N\setvarifold{A_j}{k_j}\restrict \openball{a}{r}.
\end{equation*}
\end{theorem}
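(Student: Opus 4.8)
The plan is to reduce the structure theorem to the partition results of Section 3, applied to the tangent map $\tmap V$ itself. Fix $a\in U$ and choose $r>0$ with $\closedball{a}{r}\subset U$. Since $V\in\ivarifolds_m(U)$ is a curvature varifold with $\gsff V(x)=0$ $\mass V$-a.e., we have $\gmeancurvature(V,x)=\trace{\tmap V(x)}(\gsff V(x))=0$ $\mass V$-a.e., so $\mass{\delta V}=0$; in particular Hypothesis \ref{basic hypothesis} holds (up to shrinking $r$ so that $\density m{\mass V}{\cdot}$ behaves well, using $\density m{\mass V}{x}\geq 1$ for integral varifolds). By upper semicontinuity of the density and the monotonicity formula, after shrinking $r$ we may also arrange that $\density m{\mass V}{a}=Q$ for some $Q\in\pinteger$ and that $r^{-m}\mass V(\closedball a r)\leq (Q+\varepsilon_3)\unitmeasure m$ and $\smcurvature V a r q\leq\varepsilon_4$ hold trivially (the curvature term vanishes), where $\varepsilon_3,\varepsilon_4$ are the constants of Corollary \ref{partition corollary}. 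The key point is that we apply Corollary \ref{partition corollary} with $f=\tmap V$, $Y=\{A\in\homom{\real^n}{\real^n}:A^*=A\}$, and $\sigma=0$, since $\vweakd V{f}=\gsff V=0$ $\mass V$-a.e.

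With $\sigma=0$, conclusion (3) of Corollary \ref{partition corollary} gives, for each $k$, a finite set $\Upsilon\subset Y$ with $\cardinality\Upsilon\leq Q$ such that $\distance^Y(\tmap V(x),\Upsilon)=0$ for $\mass V$-a.e.\ $x\in\openball a{r_k}$; that is, $\tmap V$ takes values in the finite set $\Upsilon=\{P_1{}_\sharp,\ldots,P_N{}_\sharp\}$ of orthogonal projections (discarding any elements of $\Upsilon$ not of this form, and noting genuine projections are isolated among symmetric matrices). Thus $\mass V$-a.e.\ the approximate tangent plane of $V$ lies in the finite set $\{P_1,\ldots,P_N\}\subset\grassmannian n m$. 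Decomposing $\openball a{r_1}$ into the level sets $E_j=\{x:\tmap V(x)=P_j{}_\sharp\}$, each $V\restrict E_j$ is an integral $m$-varifold whose tangent plane is $\mass V$-a.e.\ the \emph{fixed} plane $P_j$, and whose first variation still vanishes (the partition components inherit $\gmeancurvature=0$ by Lemma \ref{inherited hypothesis}, and the pieces glue by Definition \ref{partition definition}(2),(3)). A varifold with constant approximate tangent plane $P_j$ and zero first variation is, by the constancy theorem (Allard / \cite{simon1983:gmtbook}*{\textsection 41}), of the form $\setvarifold{A}{k}$ for affine planes $A$ parallel to $P_j$ with integer multiplicities $k$; summing over $j$ and over the planes in each direction, and shrinking $r$ once more so that only finitely many of these affine planes meet $\openball a r$, yields $V\restrict\openball a r=\sum_{j=1}^N\setvarifold{A_j}{k_j}\restrict\openball a r$.

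The main obstacle is the passage from ``$\tmap V$ is $\mass V$-a.e.\ valued in a finite set of projections'' to the clean affine-plane decomposition: one must verify that each level-set piece $V\restrict E_j$ is again a (stationary, integral) varifold to which the constancy theorem applies, that its support is a finite union of affine $m$-planes in a small enough ball, and that the multiplicities are constant integers on each component. This requires combining Definition \ref{partition definition}(1) (the level sets have $\vboundary V{E_j}=0$, which should follow since $\tmap V$ is $V$-weakly differentiable with derivative $\gsff V=0$, so every superlevel set of a coordinate of $\tmap V$ has zero generalized boundary by \cite{menne2016.1}*{Lemma 8.29} exactly as in the proof of Lemma \ref{separation lemma}) with the classical constancy theorem, and a final radius reduction using local finiteness of the mass. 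A secondary technical point is ensuring $\tmap V\in\vweakfunctions V Y$ genuinely has a well-defined finite set of values rather than merely approximate values; since $\gsff V=0$ forces $\tmap V$ to be, after modification on a null set, locally constant in the $V$-weak sense, this is where the hypothesis $\gsff V\equiv 0$ is used essentially.
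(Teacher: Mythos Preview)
Your argument is essentially correct but follows a different route from the paper. The paper's proof bypasses Section~3 entirely: it invokes \cite{menne2016.1}*{Theorem 8.34}, which directly gives a decomposition $\Theta$ of $V$ into \emph{indecomposable} pieces on each of which $\tmap{V}$ is $\mass{W}$-a.e.\ constant, and then applies \cite{menne2010}*{3.1} to conclude each piece is locally a single affine plane with integer multiplicity. This makes the structure theorem logically independent of Corollary~\ref{partition corollary}, which is one of the paper's stated goals (see the Remark following the theorem). Your route---applying Corollary~\ref{partition corollary} with $f=\tmap V$ and $\sigma=0$ to force $\tmap V$ into a finite set, then splitting $V$ along level sets via Lemma~\ref{separation lemma}---is precisely the alternative the paper alludes to in that Remark, and has the advantage of staying within the machinery developed here. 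The cost is that one must separately handle the case $a\notin\support{\mass V}$, justify $\density{m}{\mass V}{a}\in\pinteger$ for a stationary integral varifold, and note that Corollary~\ref{partition corollary} as stated requires $\sigma>0$ (though the proof goes through verbatim with $\sigma=0$, or one may pass to the limit). Your final ``constancy theorem'' step also needs a slightly sharper citation: the classical Allard constancy theorem concerns varifolds supported in a given submanifold, whereas you need that a stationary integral varifold with a.e.\ constant tangent plane $P$ is a locally finite sum of affine planes parallel to $P$; this is the content of \cite{menne2010}*{3.1} or \cite{almgren2000}*{Theorem~3.6}, both already used elsewhere in the paper.
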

\begin{proof}
It follows from \cite{menne2016.1}*{Theorem 8.34} that there exists a decomposition $\Theta$ of $V$ in the sense of \cite{menne2016.1}*{Definition 6.9} (which is the same as a partition except that each element is indecomposable) and $P:\Theta\rightarrow\grassmannian{n}{m}$ such that for each $W\in\Theta$ the tangent map satisfies $\tmap{W}(x)=P(W)_\sharp$ for $\mass{W}$-almost every $x\in U$.
Next we observe that $\delta W=0$ so it follows from \cite{menne2010}*{3.1 p.379} that for each $a\in\support{\mass{W}}$ we can find $r>0$ such that $W\restrict\openball{a}{r}=\setvarifold{P(W)+a}{\density{m}{W}{a}}\restrict \openball{a}{r}$ and $\density{m}{W}{a}\in\pinteger$, which concludes the proof.
\end{proof}

\begin{remark}
One could obtain a similar proof using our partition Theorem \ref{partition theorem} and the height estimates given by Lemma \ref{conical estimate} below.
The above proof illustrates that the structure theorem is independent of Hutchinson's results in \cite{hutchinson1986.2}.
Although it relies on other non-trivial results such as \cite{menne2016.1}*{Theorem 8.34} and the equivalency of the definitions of curvature varifolds \cite{menne2016.1}*{Theorem 15.6}.
\end{remark}

We continue by proving a height estimate and technical lemmas to derive estimates on truncated cylinder from corresponding estimates on closed balls in order to obtain the graphical Lipschitz approximation.

\begin{lemma}\label{conical estimate}
Let $m,n,Q\in\pinteger$, $\delta_1,\delta_2,\sigma \in\real$, $U\subset\real^n$ be an open set.
Suppose $m\leq n$, $r>0$ and $\delta_1,\delta_2,\sigma\in(0,1)$.

There exist positive constants $\lambda_0(m,\delta_2)$ and $\varepsilon_5(n,m,Q,\delta_1,\delta_2,\sigma)$ with the following property.
If $\closedball{a}{r}\subset U$, $V\in\ivarifolds_m(U)$ satisfies Hypothesis \ref{basic hypothesis}, $a\in\support{\mass{V}}$ and $P\in\grassmannian{n}{m}$ satisfy:
\begin{enumerate}[(i)]
\item $(Q-\delta_1)\unitmeasure{m}\leq t^{-m}\mass{V}\closedball{a}{t} \leq (Q+\delta_2)\unitmeasure{m}$;
\item $\totalcurvature{V}{\closedball{a}{t}}{m}\leq\varepsilon_5$ and
\item $\int_{\closedball{a}{t}\times\grassmannian{n}{m}}|S-P|\integrald V_{(x,S)}\leq \varepsilon_5\mass{V}(\closedball{a}{t})$,
\end{enumerate}
for all $0<t\leq r$.

Then $\mass{V}(\openball{a}{\lambda_0 r}\cap\{x\in \real^n:|P^\perp_{\sharp}(x-a)|>\sigma|P_\sharp(x-a)|\})=0$.
\end{lemma}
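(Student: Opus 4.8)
The plan is to argue by contradiction and blow-up. Set $\lambda_0:=\tfrac12\sqrt{1-\delta_2^{2/m}}$, so $0<\lambda_0<\tfrac12$ as $0<\delta_2<1$, and write $d_*:=\sqrt{1-\delta_2^{2/m}}=2\lambda_0\in(0,1)$. Suppose the assertion fails for this $\lambda_0$. Then for each $j\in\pinteger$ there are an open $U_j\subset\real^n$, a point $a_j\in\support\mass{V_j}$, a radius $r_j>0$, a plane $P_j\in\grassmannian{n}{m}$ and $V_j\in\ivarifolds_m(U_j)$ satisfying Hypothesis \ref{basic hypothesis}, for which (i)--(iii) hold with $\varepsilon_5=1/j$, yet $\mass{V_j}\bigl(\openball{a_j}{\lambda_0 r_j}\cap\{x:|P^\perp_{j\sharp}(x-a_j)|>\sigma|P_{j\sharp}(x-a_j)|\}\bigr)>0$. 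After a translation and a homothety we may assume $a_j=0$ and $r_j=1$. The displayed region is open and carries positive $\mass{V_j}$-measure, so by Hypothesis \ref{basic hypothesis}(i) it contains a point $y_j\in\support\mass{V_j}$ with $\density{m}{\mass{V_j}}{y_j}\ge 1$; put $s_j:=|y_j|\in(0,\lambda_0)$ and apply the homothety $x\mapsto s_j^{-1}x$. Denote the resulting data $\tilde V_j,\tilde y_j,\tilde P_j:=P_j$; then $|\tilde y_j|=1$, $|P^\perp_{j\sharp}(\tilde y_j)|>c_\sigma:=\sigma(1+\sigma^2)^{-1/2}$, and (i)--(iii) with $\varepsilon_5=1/j$ hold on $\closedball{0}{t}$ for all $0<t\le s_j^{-1}$; here $s_j^{-1}>\lambda_0^{-1}=2/d_*$.

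Passing to a subsequence, assume $s_j^{-1}\to\tau_\infty\in[2/d_*,\infty]$ and $\tilde P_j\to P_\infty$. By H\"older's inequality, (i) and (ii) give $\mass{\delta\tilde V_j}(\closedball{0}{t})\le j^{-1}\bigl((Q+\delta_2)\unitmeasure{m}t^m\bigr)^{1-1/m}\to 0$ for each fixed $t<\tau_\infty$, so $\delta\tilde V_j\to 0$ locally in $\openball{0}{\tau_\infty}$; together with the uniform local mass bound from (i), Allard's compactness theorem for integral varifolds yields, along a diagonal subsequence, a stationary integral varifold $V_\infty\in\ivarifolds_m(\openball{0}{\tau_\infty})$ with $\tilde V_j\rightharpoonup V_\infty$. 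Taking limits in (i) gives $(Q-\delta_1)\unitmeasure{m}t^m\le\mass{V_\infty}(\closedball{0}{t})\le(Q+\delta_2)\unitmeasure{m}t^m$ for all $t<\tau_\infty$, and taking limits in (iii) gives $\int|S-P_\infty|\,\integrald V_{\infty(x,S)}=0$, so the approximate tangent plane of $V_\infty$ is $P_\infty$ at $\mass{V_\infty}$-almost every point. Therefore, by rectifiability and the constancy theorem (the ingredients already used in Theorem \ref{structure theorem}), $V_\infty$ is a locally finite sum $\sum_i k_i\,\setvarifold{(P_\infty+v_i)\cap\openball{0}{\tau_\infty}}{1}$ with $k_i\in\pinteger$ and distinct $v_i\in P_\infty^\perp$.

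Now I would exploit the density pinching. Since $\mass{V_\infty}(\closedball{0}{t})\ge(Q-\delta_1)\unitmeasure{m}t^m>0$ we have $0\in\support\mass{V_\infty}$, hence $0\in P_\infty+v_i$ for some $i$; as $v_i\in P_\infty^\perp$ this forces $v_i=0$, so $P_\infty$ itself occurs, with some multiplicity $Q_0\in\pinteger$, and $\density{m}{\mass{V_\infty}}{0}=Q_0\in[Q-\delta_1,Q+\delta_2]\cap\pinteger=\{Q\}$, i.e.\ $Q_0=Q$. On the other hand, applying the Michael--Simon inequality to $\tilde V_j$ on a fixed small ball about $\tilde y_j$ — legitimate because $|\gmeancurvature(\tilde V_j)|$ has small $\mathbf{L}^m$-norm there by (ii) — gives a uniform estimate $\mass{\tilde V_j}(\closedball{\tilde y_j}{\rho})\ge c(m)\rho^m$, so the accumulation point $\tilde y_\infty$ of $(\tilde y_j)$ lies in $\support\mass{V_\infty}$. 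Since $|P^\perp_{\infty\sharp}(\tilde y_\infty)|\ge c_\sigma>0$, the point $\tilde y_\infty$ lies on a plane $P_\infty+v$ with $v\ne 0$, multiplicity $k\ge 1$, and $|v|=|P^\perp_{\infty\sharp}(\tilde y_\infty)|\le|\tilde y_\infty|=1$. For $|v|<t<\tau_\infty$ the planes $P_\infty$ and $P_\infty+v$ contribute $Q\unitmeasure{m}t^m$ and $k\unitmeasure{m}(t^2-|v|^2)^{m/2}$ to $\mass{V_\infty}(\closedball{0}{t})$, so the upper bound above forces $k(1-|v|^2/t^2)^{m/2}\le\delta_2$; since $k\ge 1$ this gives $(1-|v|^2/t^2)^{m/2}\le\delta_2$, equivalently $|v|\ge d_*\,t$, for all such $t$. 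Letting $t\uparrow\tau_\infty\ge 2/d_*$ yields $|v|\ge 2$, contradicting $|v|\le 1$. Hence the lemma holds with $\lambda_0=\tfrac12\sqrt{1-\delta_2^{2/m}}$ and some $\varepsilon_5=\varepsilon_5(n,m,Q,\delta_1,\delta_2,\sigma)>0$.

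The step I expect to be most delicate is the extraction and identification of the blow-up limit: verifying that $V_\infty$ is stationary and integral (Allard compactness together with the $\mathbf{L}^m$-to-$\mathbf{L}^1$ H\"older control of the first variation); that a stationary integral varifold with an everywhere-constant tangent plane is a locally finite sum of parallel planes with integer multiplicities (rectifiability plus the constancy theorem); and — the point where the second rescaling is essential — that the far-from-the-axis point $\tilde y_j$ survives to the limit, which relies on the Michael--Simon lower mass-density estimate and on the openness of the "bad cone". The remaining computation, balancing the mass of a superfluous parallel plane against the admissible excess $\delta_2$, is elementary, and it is precisely this balance that ties $\lambda_0$ to $\delta_2$ through $\sqrt{1-\delta_2^{2/m}}$.
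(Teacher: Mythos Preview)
Your proof is correct and follows essentially the same blow-up/compactness strategy as the paper: contradict, pass to a stationary integral limit whose tangent plane is a.e.\ $P_\infty$, identify it as a union of parallel affine $m$-planes with the one through the origin of multiplicity exactly $Q$, and derive a mass contradiction from the extra plane through the surviving off-axis point. The only notable difference is that you always rescale by $|y_j|$ and work on $\openball{0}{\tau_\infty}$ with $\tau_\infty\ge 2/d_*$, which neatly merges the paper's two cases ($\bar x\neq 0$ versus $\bar x=0$) into one; the paper instead fixes the unit scale first and only rescales in the degenerate case, choosing $\lambda_0$ via $(1-\lambda_0^2)^{m/2}\ge\frac{1+\delta_2}{2}$ and a separate $\tau_0$. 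One terminological quibble: the uniform lower bound $\mass{\tilde V_j}(\closedball{\tilde y_j}{\rho})\ge c(m)\rho^m$ that forces $\tilde y_\infty\in\support{\mass{V_\infty}}$ is really the monotonicity formula (Lemma~\ref{monotonicity formula}) rather than Michael--Simon, though the conclusion is the same.
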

\begin{proof}
Fix $0<\lambda_0<1$ such that $(1-\lambda_0^2)^{\frac{m}{2}}\geq \frac{1+\delta_2}{2}$ and suppose by contradiction that such $\varepsilon_5>0$ does not exist.
Let $\{\epsilon_i\}_{i\in\pinteger}$ be an arbitrary sequence of positive number such that $\epsilon_i\rightarrow 0$.
There exist sequences $V_i\in\ivarifolds_m(U)$, $a_i\in\support{\mass{V_i}}$, $P_i\in\grassmannian{n}{m}$ and $r_i>0$ satisfying $(i),(ii)$ and $(iii)$ with respect to $\epsilon_i$ and
\begin{equation*}
\mass{V_i}(\openball{a_i}{\lambda_0 r_i}\cap\{x\in \real^n:|{P_i}^\perp_{\sharp}(x-a_i)|>\sigma|{P_i}_\sharp(x-a_i)|\})>0.
\end{equation*}
Since these are all scale invariant, we may assume by translation, rotation and dilation that $a_i=0$, $P_i=P$ and $r_i=1$ are fixed for all $i\in\pinteger$.

Take $x_i\in\support{\mass{V_i}}\cap\openball{0}{\lambda_0}\cap\{x\in \real^n:|P^\perp_{\sharp}(x)|>\sigma|P_\sharp(x)|\}$.
By taking a subsequence we may assume that $x_i\rightarrow \bar{x}\in\closedball{0}{\lambda_0}\cap\{x\in \real^n:|P^\perp_{\sharp}(x)|\geq\sigma|P_\sharp(x)|\}$.

First we assume $\bar{x}\neq 0$.
It follows from \cite{allard1972}*{Theorem 6.4} that $V_i\restrict\closedball{0}{1}$ converges to a stationary varifold $V\in\ivarifolds_m(\closedball{0}{1})$ such that $S=P$ for $V$-almost every $(x,S)\in\closedball{0}{1}\times\grassmannian{n}{m}$ and $0,\bar{x}\in\support{\mass{V}}$.
Therefore, \cite{almgren2000}*{Theorem 3.6} implies that there exist $\alpha_1,\ldots,\alpha_k\in\pinteger$ and $v_1,\ldots,v_k\in P^\perp$ such that
\begin{equation*}
V=\sum_{j=1}^k\setvarifold{(P+v_j)\cap\closedball{0}{1}}{\alpha_j}.
\end{equation*}
It follows from $(i)$ that $Q-\delta_1\leq\density{m}{V}{0}\leq Q+\delta_2$ so we may further assume that $v_1=0$ and $\alpha_1=Q$.
By assumption $\bar{x}\neq 0$ so we may further assume that $v_2=P^\perp\sharp \bar x$ and $0<|v_2|\leq\lambda_0$.
In particular we have
\begin{equation*}
\begin{aligned}
(Q+\delta_2)\unitmeasure{m}& \geq\mass{V}(\closedball{0}{1})\\
                           & \geq Q\unitmeasure{m}+ (1-|v_2|^2)^{\frac{m}{2}}\unitmeasure{m}\\
                           & \geq Q\unitmeasure{m}+ (1-\lambda_0^2)^{\frac{m}{2}}\unitmeasure{m},
\end{aligned}
\end{equation*}
which contradicts our choice of $\lambda_0$.

Now, assume $\bar{x}=0$.
Let $d_i=|x_i|$, by further dilation we may assume $r_i=d_i^{-1}$, $\tilde{x_i}=d_i^{-1}x_i$ and $\tilde{x_i}\rightarrow \tilde{x}$ with $\tilde{x}=1$.
Fix $\tau_0>0$ sufficiently large such that $\frac{((1+\tau_0)^2-1)^{\frac{m}{2}}}{(1+\tau_0)^m}=\frac{1+\delta_2}{2}$.
For $i$ suffiently large we have $d_i^{-1}>1+\tau_0$ so we may proceed as before and conclude that $V_i\restrict\closedball{0}{1+\tau_0}$ converges to a stationary varifold $\tilde{V}\in\ivarifolds_m(\closedball{0}{1+\tau_0})$ such that $S=P$ for $\tilde{V}$-almost every $(x,S)\in\closedball{0}{1}\times\grassmannian{n}{m}$ and $0,\tilde{x}\in\support{\mass{\tilde{V}}}$ and also apply \cite{almgren2000}*{Theorem 3.6} to $\tilde{V}$.

As above, we may further assume that $v_1=0$ and $\alpha_1=Q$.
By assumption $\tilde{x}\neq 0$ so we may further assume that $v_2=P^\perp\sharp \bar x$ and $0<|v_2|\leq 1$.
In particular we have
\begin{equation*}
\begin{aligned}
(Q+\delta_2)\unitmeasure{m}(1+\tau_0)^m& \geq\mass{V}(\closedball{0}{1+\tau_0})\\
                           & \geq Q\unitmeasure{m}(1+\tau_0)^m+ ((1+\tau_0)^2-|v_2|^2)^{\frac{m}{2}}\unitmeasure{m}\\
                           & \geq Q\unitmeasure{m}(1+\tau_0)^m+ ((1+\tau_0)^2-1)^{\frac{m}{2}}\unitmeasure{m},
\end{aligned}
\end{equation*}
which contradicts our choice of $\tau_0$ and concludes the proof.
\end{proof}

\begin{lemma}\label{cylinder estimate}
Let $m,n,Q\in\pinteger$, $r,\delta_1,\delta_2,\delta_3\in\real$, $U\subset\real^n$ be an open set, $P\in\grassmannian{n}{m}$, $V\in\ivarifolds_m(U)$ satisfying Hypothesis \ref{basic hypothesis} and $a\in\support{\mass{V}}$.
Suppose $m\leq n$, $r>0$, $\delta_1,\delta_2\in(0,1)$ and $\delta_3\in(0,\frac{1}{4})$.

There exist positive constants $\lambda_1(m,Q,\delta_2,\delta_3)$ and $\varepsilon_6(n,m,Q,\delta_1,\delta_2,\delta_3)$ with the following property.
If $\closedball{a}{r}\subset U$ and $V$ satisfy:
\begin{enumerate}[(i)]
\item $(Q-\delta_1)\unitmeasure{m}\leq t^{-m}\mass{V}\closedball{a}{t} \leq (Q+\delta_2)\unitmeasure{m}$;
\item $\totalcurvature{V}{\closedball{a}{t}}{m}\leq\varepsilon_6$ and
\item $\int_{\closedball{a}{t}\times\grassmannian{n}{m}}|S-P|\integrald V_{(x,S)}\leq \varepsilon_6\mass{V}(\closedball{a}{t})$,
\end{enumerate}
for all $0<t\leq r$.

Then, for $s=\frac{\lambda_1 r}{2}$ we have
\begin{enumerate}[(1)]
\item $(Q-\delta_1)\unitmeasure{m}\leq s^{-m}\mass{V}(\tcylinder{P}{a}{s}{s})\leq(Q+\frac{1+\delta_2}{2})\unitmeasure{m}$;
\item $\mass{V}(\tcylinder{P}{a}{s}{(1+\delta_3)s}\setminus\tcylinder{P}{a}{s}{(1-2\delta_3)s})=0$ and
\item $s^{-m}\mass{V}(\closedball{\tcylinder{P}{a}{s}{s}}{2s})\leq 4^m(Q+\delta_2)\unitmeasure{m}$.
\end{enumerate}
\end{lemma}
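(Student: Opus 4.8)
\noindent\emph{Proof strategy.}
The plan is to deduce all three conclusions from the conical confinement of Lemma~\ref{conical estimate}: under precisely hypotheses (i)--(iii) of the present lemma, with a sufficiently small threshold, the support of $V$ stays inside a thin cone around $P$ throughout a ball of radius $\lambda_0 r$. After translating I may take $a=0$, and for $x\in\real^n$ I abbreviate $\xi=P_{\sharp}x$, $\eta=P^{\perp}_{\sharp}x$, so $|x|^2=|\xi|^2+|\eta|^2$; put $s=\tfrac12\lambda_1 r$.

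First I would fix the aperture. Since $\delta_2\in(0,1)$ one has $Q+\delta_2<Q+\tfrac{1+\delta_2}{2}$, so continuity of $\sigma\mapsto(1+\sigma^2)^{m/2}$ at $0$ yields $\sigma=\sigma(m,Q,\delta_2,\delta_3)\in(0,1)$ with
\[
(Q+\delta_2)(1+\sigma^2)^{m/2}\le Q+\tfrac{1+\delta_2}{2}
\quad\text{and}\quad
\sigma<1-2\delta_3
\]
(the latter being possible because $\delta_3<\tfrac14$ gives $1-2\delta_3>\tfrac12$). With $\lambda_0(m,\delta_2)$ and $\varepsilon_5(n,m,Q,\delta_1,\delta_2,\sigma)$ the constants of Lemma~\ref{conical estimate} for this $\sigma$, I would set $\varepsilon_6:=\varepsilon_5$ and $\lambda_1:=\min\{\tfrac12,\lambda_0\}$, so that $2s\le\lambda_0 r$ and $4s\le r$. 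Then (i)--(iii) of the present lemma are hypotheses (i)--(iii) of Lemma~\ref{conical estimate} with $\varepsilon_5$ replaced by $\varepsilon_6=\varepsilon_5$, hence
\[
\mass{V}\bigl(\openball{0}{\lambda_0 r}\cap\{x\in\real^n:|\eta|>\sigma|\xi|\}\bigr)=0,
\]
so $\mass{V}$-almost every $x\in\openball{0}{2s}$ satisfies $|\eta|\le\sigma|\xi|$.

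It then remains to read off the three claims from elementary geometry. For the lower bound in (1): since $|x|<s$ forces $|\xi|,|\eta|<s$, we have $\openball{0}{s}\subseteq\tcylinder{P}{0}{s}{s}$, and (i) applied on $\closedball{0}{t}$ with $t\uparrow s$ gives $\mass{V}(\tcylinder{P}{0}{s}{s})\ge(Q-\delta_1)\unitmeasure{m}s^m$; for the upper bound, any $x\in\tcylinder{P}{0}{s}{s}$ with $|\eta|\le\sigma|\xi|$ has $|x|^2\le(1+\sigma^2)|\xi|^2<(1+\sigma^2)s^2$, so up to a $\mass{V}$-null set $\tcylinder{P}{0}{s}{s}\subseteq\openball{0}{s\sqrt{1+\sigma^2}}$, and (i) at scale $s\sqrt{1+\sigma^2}\le r$ together with the choice of $\sigma$ bounds $\mass{V}(\tcylinder{P}{0}{s}{s})$ by $(Q+\tfrac{1+\delta_2}{2})\unitmeasure{m}s^m$. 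For (2): a point of $\tcylinder{P}{0}{s}{(1+\delta_3)s}\setminus\tcylinder{P}{0}{s}{(1-2\delta_3)s}$ has $|\xi|<s$ and $|\eta|\ge(1-2\delta_3)s$, and if it also satisfied $|\eta|\le\sigma|\xi|$ then $(1-2\delta_3)s\le\sigma|\xi|<\sigma s$, against $\sigma<1-2\delta_3$; hence this set lies in $\openball{0}{2s}$ and is disjoint from $\{|\eta|\le\sigma|\xi|\}$, so it is $\mass{V}$-null. For (3): $\tcylinder{P}{0}{s}{s}\subseteq\closedball{0}{\sqrt2\,s}$ gives $\closedball{\tcylinder{P}{0}{s}{s}}{2s}\subseteq\closedball{0}{(2+\sqrt2)s}\subseteq\closedball{0}{4s}$, and (i) at scale $4s\le r$ yields $\mass{V}(\closedball{\tcylinder{P}{0}{s}{s}}{2s})\le 4^m(Q+\delta_2)\unitmeasure{m}s^m$. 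The only point needing care is the joint calibration of $\sigma$ (so both displayed inequalities hold at once) and of $\lambda_1$ (so the cylinders appearing in (1) and (2) sit inside $\openball{0}{\lambda_0 r}$, where the cone confinement holds, while the $2s$-neighbourhood in (3) sits inside $\closedball{0}{r}$, where the density bound (i) is available); apart from tracking these radii the argument is routine.
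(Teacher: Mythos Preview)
Your proof is correct and follows essentially the same route as the paper: choose the cone aperture $\sigma$ so that both $(1+\sigma^2)^{m/2}(Q+\delta_2)\le Q+\tfrac{1+\delta_2}{2}$ and $\sigma<1-2\delta_3$, invoke Lemma~\ref{conical estimate} to confine $\support\mass{V}$ near $a$ inside the $\sigma$-cone over $P$, and then read off (1)--(3) from elementary inclusions between cylinders and balls. Your calibration $\lambda_1=\min\{\tfrac12,\lambda_0\}$ is a slight simplification of the paper's $\lambda_1=\min\{\lambda_0,\tfrac{2}{2+\sqrt2},\tfrac{2}{(1+\sigma^2)^{1/2}}\}$, but since $\tfrac12<\tfrac{2}{2+\sqrt2}$ and the third term exceeds $1$, your choice is at least as restrictive and all the required containments (in particular $\tcylinder{P}{a}{s}{(1+\delta_3)s}\subset\openball{a}{2s}\subset\openball{a}{\lambda_0 r}$ and $(2+\sqrt2)s\le 4s\le r$) go through.
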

\begin{proof}
Choose $0<\sigma_0<1-2\delta_3$ sufficiently small so that $(1+\sigma_0^2)^{\frac{m}{2}}<\frac{Q+\frac{1+\delta_2}{2}}{Q+\delta_2}$ and $\varepsilon_6=\varepsilon_5(n,m,Q,\delta_1,\delta_2,\sigma_0)$ be given by Lemma \ref{conical estimate}.
Let $\lambda_0(m,\delta_2)$ be given by Lemma \ref{conical estimate} and fix $\lambda_1=\min\{\lambda_0,\frac{2}{2+\sqrt{2}},\frac{2}{(1+\sigma_0^2)^{\frac{1}{2}}}\}$.
For simplicity we write $\tcylshort{s}{s}=\tcylinder{P}{a}{s}{s}$ and $\cbshort{s}=\closedball{a}{s}$.

First note that $\cbshort{s}\subset\tcylshort{s}{s}$ and Lemma \ref{conical estimate} implies 
\begin{equation*}
\support{\mass{V}}\cap\tcylshort{s}{s}\subset\tcylshort{s}{\sigma_0 s}\subset\cbshort{(1+\sigma_0^2)^{\frac{1}{2}}s},
\end{equation*}
which proves $(1)$.

Secondly, we note that $\tcylshort{s}{(1+\delta_3)s}\subset\cbshort{\lambda_0 r}$ and $\support{\mass{V}}\cap\tcylshort{s}{s}\subset\tcylshort{s}{(1-2\delta_3)s}$, hence $\support{\mass{V}}\cap(\tcylshort{s}{(1+\delta_3)s}\setminus\tcylshort{s}{(1-2\delta_3)s})=\emptyset$, which proves $(2)$.

Finally, it follows directly that $\closedball{\tcylinder{P}{a}{s}{s}}{2s})\subset\cbshort{(2+\sqrt{2})s}\subset\cbshort{r}$ so that $(3)$ is proved.
\end{proof}

\begin{definition}
Let $n,m,Q\in\pinteger$, $U\subset\real^m$ be an open set and $u:U\rightarrow\qspace{Q}{\real^{m-n}}$ be a $Q$-valued function.
We write 
$\graph{u}=\{(x,y)\in U\times\real^{n-m}:y\in\support{u(x)}\}$,
$\theta_u:U\times\real^{n-m}\rightarrow\real$ as 
$\theta_u(x,y)=\sum_{y\in\support{u(x)}}\density{0}{|u(x)|}{y}$ and
define the associated varifold to $u$ as $\graphvarifold{u}=\setvarifold{\graph{u}}{\theta_u}$.
\end{definition}

The following is a reduced version of \cite{menne2013.1}*{Lemma 4.1} adapted to our specific set-up.

\begin{lemma}[\cite{menne2013.1}*{Lemma 4.1}]\label{lipschitz lemma}
Let $n,m,Q\in\pinteger$, $r,\delta,L\in\real$, $U\subset\real^n$ be an open set, $P\in\grassmannian{n}{m}$, $V\in\ivarifolds_m(U)$ satisfying Hypothesis \ref{basic hypothesis} and $a\in\support{\mass{V}}$.
Suppose $m\leq n$, $r,L>0$ and $\delta\in(0,1)$.

There exist positive constants $\lambda_2(m,Q,\delta)\in(0,\frac{1}{2})$ and $\varepsilon_7(n,m,Q,\delta,L)$ with the following property.
If $\closedball{a}{r}\subset U$ and $V$ satisfy:
\begin{enumerate}[(i)]
\item $(Q-\delta)\unitmeasure{m}\leq t^{-m}\mass{V}\closedball{a}{t} \leq (Q+\delta)\unitmeasure{m}$;
\item $\totalcurvature{V}{\closedball{a}{t}}{m}\leq\varepsilon_7$ and
\item $|\tmap{V}(x)-P|\leq\varepsilon_7$ for $\|V\|$-almost every $x\in\closedball{a}{r}$,
\end{enumerate}
for all $0<t\leq r$.

Then, for $s=\frac{\lambda_2 r}{2}$ there exist $Z\subset\planeopenball{P}{a}{s}$ a Lipschitz function $u:Z\rightarrow\qspace{Q}{\real^{n-m}}$ such that
\begin{enumerate}[(1)]
\item $Z$ is $\lebesgue{m}$-measurable and $\lebesgue{m}(\planeopenball{P}{a}{s}\setminus Z)=0$;
\item $\lip{u}\leq L$;
\item $V\restrict\tcylinder{P}{a}{s}{s}=\graphvarifold{u}$;
\item $u$ is approximately strongly affinely approximable at $\lebesgue{m}$-almost every $z\in Z$ and
\item For $\lebesgue{m}$-almost every $z\in Z$ we have $\tmap{V}(x)=\textup{Tan}(\graph{\affineap{u}(z)},x)$ with $x\in\graph{u}$ and $z=a+P_\sharp(x-a)$.
\end{enumerate}
\end{lemma}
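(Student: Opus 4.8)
The plan is to obtain this statement as a direct consequence of \cite{menne2013.1}*{Lemma 4.1}, after using Lemma \ref{cylinder estimate} to convert the ball hypotheses into the cylinder hypotheses required by Menne's result. First I would fix the auxiliary parameters: apply Lemma \ref{cylinder estimate} with $\delta_1=\delta_2=\delta$ and a suitably small $\delta_3\in(0,\tfrac14)$ (dictated by the collar condition needed in \cite{menne2013.1}*{Lemma 4.1}), obtaining $\lambda_1=\lambda_1(m,Q,\delta,\delta_3)$ and $\varepsilon_6=\varepsilon_6(n,m,Q,\delta,\delta_3)$; then read off from \cite{menne2013.1}*{Lemma 4.1} the associated smallness threshold $\eta=\eta(n,m,Q,\delta,L)$ for the tilt-excess and the first variation, together with the shrinking factor $c=c(m,Q,\delta)\in(0,1]$ relating the radius of the cylinder on which $u$ is produced to the radius on which the hypotheses are posed; finally set $\varepsilon_7=\min\{\varepsilon_6,\eta\}$ and $\lambda_2=c\lambda_1$, so that $s=\lambda_2 r/2$ is exactly the radius furnished by Menne's lemma and the dependencies of $\lambda_2$ and $\varepsilon_7$ are as claimed.

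Second I would verify that hypotheses (i)--(iii) here imply hypotheses (i)--(iii) of Lemma \ref{cylinder estimate} for every $0<t\le r$. Items (i) and (ii) are literally the same (with $\delta_1=\delta_2=\delta$ and $\varepsilon_7\le\varepsilon_6$). For (iii) I use that $V\in\ivarifolds_m(U)$, so $\tmap{V}(x)=S_\sharp$ for $V$-almost every $(x,S)$; hence $\int_{\closedball{a}{t}\times\grassmannian{n}{m}}|S-P|\integrald V_{(x,S)}=\int_{\closedball{a}{t}}|\tmap{V}(x)-P|\,\integralvx{V}{x}\le\varepsilon_7\mass{V}(\closedball{a}{t})\le\varepsilon_6\mass{V}(\closedball{a}{t})$ by hypothesis (iii) here. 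Applying Lemma \ref{cylinder estimate} then yields, for $s=\lambda_1 r/2$, the two-sided density bound $(Q-\delta)\unitmeasure{m}\le s^{-m}\mass{V}(\tcylinder{P}{a}{s}{s})\le(Q+\tfrac{1+\delta}{2})\unitmeasure{m}$, the vanishing of $\mass{V}$ on the collar $\tcylinder{P}{a}{s}{(1+\delta_3)s}\setminus\tcylinder{P}{a}{s}{(1-2\delta_3)s}$, and the bound $s^{-m}\mass{V}(\closedball{\tcylinder{P}{a}{s}{s}}{2s})\le 4^m(Q+\delta)\unitmeasure{m}$.

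Third I would check the hypotheses of \cite{menne2013.1}*{Lemma 4.1} for the suitable restriction of $V$ near $\tcylinder{P}{a}{s}{s}$ and apply it. That $\mass{\delta V}$ is a Radon measure represented by integration against $\gmeancurvature(V,\cdot)$ and that $\density{m}{\mass{V}}{x}\ge1$ almost everywhere come from Hypothesis \ref{basic hypothesis}; the mass lower and upper bounds and the emptiness of $\support{\mass{V}}$ near the lateral boundary of the cylinder come from conclusions (1)--(3) of Lemma \ref{cylinder estimate} just recorded; the smallness of the tilt-excess comes from hypothesis (iii) here, since $|\tmap{V}(x)-P|\le\varepsilon_7$ pointwise; and the smallness of the mean-curvature contribution is obtained from hypothesis (ii) here combined with H\"older's inequality and the monotonicity-derived mass bound (i), converting $\totalcurvature{V}{\closedball{a}{t}}{m}\le\varepsilon_7$ into the normalized quantity appearing in Menne's statement. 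With $\lip{u}\le L$ built into the choice of $\eta$, \cite{menne2013.1}*{Lemma 4.1} then produces $Z\subset\planeopenball{P}{a}{s}$ of full $\lebesgue{m}$-measure and a Lipschitz $u:Z\to\qspace{Q}{\real^{n-m}}$ satisfying (1)--(4), where (3) is the varifold identity $V\restrict\tcylinder{P}{a}{s}{s}=\graphvarifold{u}$ and (4) is approximate strong affine approximability a.e. Conclusion (5) is then immediate: at a point $z\in Z$ where $u$ is approximately strongly affinely approximable, the approximate tangent plane of $\graphvarifold{u}$ at the corresponding graph point $x$ equals the tangent plane $\textup{Tan}(\graph{\affineap{u}(z)},x)$ of the affine graph, and by (3) this approximate tangent plane is $\tmap{V}(x)$.

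The main obstacle is the precise bookkeeping in the reduction to \cite{menne2013.1}*{Lemma 4.1}, specifically matching the mean-curvature and tilt-excess hypotheses: that lemma is phrased with its own normalizations (and the first variation may be stated abstractly rather than through $\gmeancurvature(V,\cdot)$), so the real work is to pass, via H\"older's inequality and the mass bounds coming from Lemma \ref{monotonicity formula} and hypothesis (i), from the scale-invariant quantity $\totalcurvature{V}{\closedball{a}{t}}{m}$ and the bound $|\tmap{V}(x)-P|\le\varepsilon_7$ on $\closedball{a}{r}$ to exactly the excess and first-variation hypotheses Menne requires on $\tcylinder{P}{a}{s}{s}$ (or on its $2s$-neighbourhood), and to track the constants so that the Lipschitz constant of $u$ is the prescribed $L$. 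Once these translations are set up, conclusions (1)--(5) are a transcription of the conclusions of \cite{menne2013.1}*{Lemma 4.1} into the notation fixed here.
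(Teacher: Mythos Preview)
Your approach is the paper's: invoke Lemma \ref{cylinder estimate} to convert the ball hypotheses to cylinder hypotheses, then apply \cite{menne2013.1}*{Lemma 4.1}. The paper in fact takes $\lambda_2=\lambda_1$ (no extra shrinking factor) and $\varepsilon_7=\min\{\varepsilon_6,\bar\varepsilon/2\}$, and reads conclusion (5) directly from \cite{menne2013.1}*{Lemma 4.1(5)} rather than rederiving it from (3) and (4) as you do.

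The one substantive point you gloss over is the exceptional set. Menne's Lemma 4.1 comes with a set $B$ on which the graphical representation may fail; in general the lemma only bounds the measure of $B$ by excess-type quantities, and outputs an identity of the form $V\restrict(\tcylinder{P}{a}{s}{s}\setminus B)=\graphvarifold{u}\restrict(\cdots)$ together with a measure estimate on $\planeopenball{P}{a}{s}\setminus Z$. The paper's proof explicitly observes that the \emph{pointwise} tilt bound in hypothesis (iii), combined with (ii), forces $B=\emptyset$; it is this, via \cite{menne2013.1}*{Lemma 4.1(6)}, that upgrades the approximate graph identity to the exact equality in conclusion (3) and gives $Z$ full $\lebesgue{m}$-measure in conclusion (1). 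Your plan assumes Menne's lemma directly hands you full-measure $Z$ and exact $V\restrict\tcylinder{P}{a}{s}{s}=\graphvarifold{u}$, but that step requires the $B=\emptyset$ argument and should be made explicit.
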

\begin{proof}
Let $\lambda_1(m,Q,1-\delta,\frac{1-\delta}{2})$ and $\varepsilon_6(n,m,1-\delta,\frac{1-\delta}{2},\frac{1}{8})$ be given by Lemma \ref{cylinder estimate} and $\bar\varepsilon$ given by \cite{menne2013.1}*{Lemma 4.1} with respect to $\delta_1=1-\delta$, $\delta_2=\frac{1-\delta}{2}$, $\delta_3=0$, $\delta_4=\frac{1}{8}$, $M=4^m(Q+\delta)$ and $S=P$.

If we fix $\lambda_2=\lambda_1$ and $\varepsilon_7=\min\{\varepsilon_6,\frac{\bar\varepsilon}{2}\}$ it follows from Lemma \ref{cylinder estimate} that the conditions of \cite{menne2013.1}*{Lemma 4.1} are satisfied with $s=\frac{\lambda_2 r}{2}$.
Therefore, we obtain a $\lebesgue{m}$-measurable set $Z\subset\planeopenball{P}{a}{s}$ and a Lipschitz function $u:Z\rightarrow\qspace{Q}{\real^{n-m}}$ satisfying $(2),(3)$ and $(5)$ (see \cite{menne2013.1}*{Lemma 4.1(4),(5)}).
Furthermore, it follows from (ii) and (iii) that the exceptional set $B$ in \cite{menne2013.1}*{Lemma 4.1} is in fact empty, so that \cite{menne2013.1}*{Lemma 4.1(6)} implies $(1)$ and $(3)$.
\end{proof}

Next we prove the uniqueness of the tangent cone for curvature varifolds with sufficiently integrable second fundamental form.

\begin{lemma}\label{unique tangent cone}
Let $n,m\in\pinteger$, $r,q, C\in\real$, $U\subset\real^n$ be an open set, $P\in\grassmannian{n}{m}$, $V\in\ivarifolds_m(U)$ satisfying Hypothesis \ref{basic hypothesis} and $a\in\support{\mass{V}}$.
Suppose $m\leq n$, $q>m$ and $r,C>0$.
If $\closedball{a}{r}\subset U$ and $V$ satisfies
\begin{enumerate}[(i)]
\item $r^{-m}\mass{V}(\closedball{a}{r})\leq C$;
\item $r^\mq\totalcurvature{V}{\closedball{a}{r}}{q}\leq C$ and
\item $|\tmap{V}(x)-P|\leq C|x-a|^\mq$ for $\mass{V}$-almost every $x\in\closedball{a}{r}$.
\end{enumerate}
Then, there exists $Q\in\pinteger$ such that $\setvarifold{P}{Q}$ is the unique tangent tangent cone of $V$ at $a$.
\end{lemma}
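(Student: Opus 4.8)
The plan is to combine the monotonicity formula (Lemma~\ref{monotonicity formula}) with Allard's integral compactness theorem and the H\"older decay in hypothesis~(iii): monotonicity produces a well--defined integer density at $a$ and forces every blow--up to be a cone, while~(iii) forces that cone to be carried by $P$, so the tangent cone is $\setvarifold{P}{Q}$ with $Q=\density{m}{\mass{V}}{a}$, independently of the blow--up sequence.

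\textbf{Step 1 (reduction and the density).} First I would note that all three hypotheses persist on smaller concentric balls: since $\smcurvature{V}{a}{t}{q}\le(t/r)^{\mq}\,\smcurvature{V}{a}{r}{q}\le(t/r)^{\mq}C$, after replacing $r$ by a sufficiently small radius we have $\econstant\,\smcurvature{V}{a}{t}{q}<1$ for every $t\le r$, so Lemma~\ref{monotonicity formula} applies with $\delta=C$ on each $\closedball{a}{t}$; hypothesis~(i) then holds on the smaller ball (with a possibly larger constant) by monotonicity, and~(iii) is inherited verbatim. Also $\density{m}{\mass{V}}{a}\ge 1$ because $a\in\support{\mass{V}}$ (cf.\ \cite{simon1983:gmtbook}*{17.8} and \ref{basic hypothesis}(i)). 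The two inequalities in Lemma~\ref{monotonicity formula} then make $s\mapsto e^{\pm\econstant C(s/r)^{\mq}}s^{-m}\mass{V}(\closedball{a}{s})$ monotone up to the exponential factor, so $Q\unitmeasure{m}:=\lim_{s\to0^+}s^{-m}\mass{V}(\closedball{a}{s})$ exists and is finite with $Q=\density{m}{\mass{V}}{a}\ge1$, and the truncated tilt integral $\int_{(\closedball{a}{t}\setminus\{a\})\times\grassmannian{n}{m}}|x-a|^{-m-2}|S^\perp_\sharp(x-a)|^2\,\integrald V_{(x,S)}$ tends to $0$ as $t\to0^+$.

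\textbf{Step 2 (blow--up).} For $0<\rho<r$ set $V_\rho=(\mu_{a,\rho})_\sharp V$ with $\mu_{a,\rho}(x)=(x-a)/\rho$. Then $\mass{V_\rho}(\closedball{0}{R})=\rho^{-m}\mass{V}(\closedball{a}{\rho R})$ is bounded uniformly in $\rho$ on each fixed $\closedball{0}{R}$ by Step~1, and Hypothesis~\ref{basic hypothesis}(ii) is preserved with $\int_{\closedball{0}{R}}|\gmeancurvature(V_\rho,\cdot)|^q\,\integralvx{V_\rho}{}=\rho^{q-m}\int_{\closedball{a}{\rho R}}|\gmeancurvature(V,\cdot)|^q\,\integralvx{V}{}\to0$ since $q>m$, whence $\mass{\delta V_\rho}(\closedball{0}{R})\to0$. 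By Allard's integral compactness theorem every sequence $\rho_j\to0^+$ has a subsequence along which $V_{\rho_j}$ converges in $\ivarifolds_m(\real^n)$ to a stationary integral varifold $\tilde V$ with $\density{m}{\mass{\tilde V}}{0}=Q$.

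\textbf{Step 3 (identification of the limit and uniqueness).} Two features pin down $\tilde V$. First, the vanishing of the truncated tilt integrals from Step~1 forces $S^\perp_\sharp y=0$ for $\mass{\tilde V}$--almost every $(y,S)$, so the stationary varifold $\tilde V$ of constant density ratio $Q$ is a cone with vertex $0$. Second, tangent planes are unchanged by translation and dilation, so hypothesis~(iii) gives $|\tmap{V_\rho}(y)-P|=|\tmap{V}(a+\rho y)-P|\le C\rho^{\mq}|y|^{\mq}$, which tends to $0$ locally uniformly; passing to the limit, the approximate tangent plane of $\tilde V$ is $P$ at $\mass{\tilde V}$--almost every point, so $\mass{\tilde V}$ is carried by a countable union of translates of $P$. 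A cone carried by countably many planes parallel to $P$ must be carried by $P$ itself, and the constancy theorem applied to the integral stationary varifold $\tilde V$ then yields $\tilde V=\setvarifold{P}{Q}$ with $Q\in\pinteger$. Thus every blow--up limit equals $\setvarifold{P}{Q}$ with the same integer $Q$, and since the weak topology on varifolds of locally uniformly bounded mass is metrizable, $V_\rho\to\setvarifold{P}{Q}$ as $\rho\to0^+$, i.e.\ $\setvarifold{P}{Q}$ is the unique tangent cone of $V$ at $a$. The main obstacle is precisely this step: one must show the blow--up is carried by $P$ rather than by a parallel plane and has a single integer multiplicity; hypothesis~(iii) is what localizes the support onto $P$, and combined with conicality and the constancy theorem it identifies the multiplicity with the subsequence--independent integer $\density{m}{\mass{V}}{a}$, which is what upgrades subsequential convergence to full convergence.
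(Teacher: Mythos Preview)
Your argument is correct and follows essentially the same route as the paper: show the blow--ups are stationary integral varifolds (via the $L^q$ bound on $\gmeancurvature$ and Allard compactness), show they are cones at the origin, use~(iii) to force the tangent plane to be $P$ everywhere, and finish with the constancy theorem. The only cosmetic difference is that the paper obtains conicality by citing \cite{allard1972}*{\textsection 6.5, Theorem 5.2(2)} (after first checking $\density{m-1}{\mass{\delta V}}{a}=0$), which directly gives $x\in S$ for $C$--a.e.\ $(x,S)$ and hence $\support{\mass{C}}\subset P$ in one line, whereas you rederive conicality from the vanishing of the tilt term in the monotonicity formula and then pass through ``support in parallel translates of $P$ + cone $\Rightarrow$ support in $P$''; both are standard and equivalent.
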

\begin{proof}
First, note that Lemma \ref{monotonicity formula} implies that $t^{-m}\mass{V}(\closedball{a}{t})\leq C'$ for all $0<t\leq r$ and some $C'>0$.
Hence
\begin{equation*}
\begin{aligned}
\mass{\delta V}(\closedball{a}{t}) & \leq \int_{\closedball{a}{t}}|\gmeancurvature(V,x)|\integrald\mass{V}_x\\
                                   & \leq \totalcurvature{V}{\closedball{a}{t}}{q}\mass{V}(\closedball{a}{t})^{1-\frac{1}{q}}\\
                                   & \leq t^{m-1}C'\left(\frac{t}{r}\right)^\mq,
\end{aligned}
\end{equation*}
for all $0<t\leq r$ and some $C'>0$.
In particular, $\density{m-1}{\mass{\delta V}}{a}=0$.

Let $C\in\varifolds_m(\real^n)$ be an arbitrary tangent cone of $V$ at $a$, in particular property $(ii)$ implies that $C$ must also be stationary.
Since $V$ is an integral varifold then so is $C$ by \cite{allard1972}*{Theorem 6.4}.
It further follows from \cite{allard1972}*{\textsection 6.5, Theorem 5.2(2)} that $x\in S$ for $C$-almost every $(x,S)\in\real^n\times\grassmannian{n}{m}$ and property $(iii)$ implies that $\tmap{C}(x)=P$ for $\mass{C}$-almost every $x\in\real^n$ so that $\support{\mass{C}}\subset P$.

The conclusion follows from the Constancy Theorem \cite{allard1972}*{Theorem 4.6(3)}.
\end{proof}

\begin{remark}
The uniqueness of tangent map in the form of \cite{hutchinson1986.2}*{Theorem 3.4} follows directly from the above Lemma \ref{unique tangent cone} and Corollary \ref{partition corollary} as it can be seen in the proof of the next theorem.
\end{remark}

Finally we conclude this section with the proof of the Lipschitz approximation for curvature $m$-varifolds with $L^q$-integrable second fundamental form and $q>m$.

\begin{theorem}
Let $n,m,Q\in\pinteger$, $r,q,L\in\real$, $U\subset\real^n$ be an open set, $V\in\ivarifolds_m(U)$ is a curvature varifold satisfying Hypothesis \ref{basic hypothesis} and $a\in\support{\mass{V}}$.
Suppose $m\leq n$, $q>m$, $r,L>0$ and $\density{m}{\mass{V}}{a}=Q$.

There exist positive constants $\lambda_3(n,m,q,Q)$ and $\varepsilon_8(n,m,q,Q,L)$ with the following property.
If $\closedball{a}{r}\subset U$ and $V$ satisfies:
\begin{enumerate}[(i)]
\item $r^{-m}\mass{V}(\closedball{a}{r})\leq (Q+\varepsilon_8)\unitmeasure{m}$ and
\item $r^\mq(\mass{V}\restrict{\closedball{a}{r}})_{(q)}(\gsff{V})\leq \varepsilon_8$.
\end{enumerate}
Then, for $s=\frac{\lambda_3 r}{2}$ there exist a finite set $\Upsilon\subset\grassmannian{n}{m}$ with $\cardinality{\Upsilon}\leq Q$ and for each $P\in\Upsilon$ there exist $Z_P\subset\planeopenball{P}{a}{s}$, $Q_P\in\pinteger$ with $\sum Q_P=Q$ and $u_P:Z_P\rightarrow\qspace{Q_P}{\real^{n-m}}$ Lipschitz function for each $P\in\Upsilon$ such that
\begin{enumerate}[(1)]
\item $Z_P$ is $\lebesgue{m}$-measurable and $\lebesgue{m}(\planeopenball{P}{a}{s}\setminus Z_P)=0$;
\item $\lip{u_P}\leq L$;
\item $V\restrict\openball{a}{s}=\graphvarifold{u_P}\restrict\openball{a}{s}$;
\item $u_P$ is approximately strongly affinely approximable at $\lebesgue{m}$-almost every $z\in Z_P$ and
\item For $\lebesgue{m}$-almost every $z\in Z_P$ we have $\tmap{V}(x)=\textup{Tan}(\graph{\affineap{u_P}(z)},x)$ with $x\in\graph{u_P}$ and $z=a+P_\sharp(x-a)$.
\end{enumerate}
\end{theorem}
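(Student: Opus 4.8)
The plan is to apply Corollary~\ref{partition corollary} to $f=\tmap V$ (available because $V$ is a curvature varifold; here $\vweakd V{\tmap V}=\gsff V$ and $|\gmeancurvature(V,\cdot)|\le c(m)|\gsff V|$), then to upgrade the density at $a$ of the resulting components to positive integers via Lemma~\ref{unique tangent cone}, to graph each component by Lemma~\ref{lipschitz lemma}, and finally to assemble the multiple valued graphs plane by plane. First I would fix $\lambda=\lambda(n,m,q,Q)\in(0,\tfrac{\varepsilon_4}{1+\varepsilon_4}]$, with $\varepsilon_4$ from Corollary~\ref{partition corollary}, and then $\varepsilon_8=\varepsilon_8(n,m,q,Q,L)>0$ small enough that: (i)--(iii) of Corollary~\ref{partition corollary} hold with $\sigma=\varepsilon_8$ (using H\"older's inequality to bound $\smcurvature V a r q\le c(m)\varepsilon_8$); the hypotheses of Lemmas~\ref{support of component contains a} and~\ref{partition lemma} hold at every scale $\le\lambda r$; by the Monotonicity Formula the ratio $t^{-m}\mass V(\closedball at)$ stays below $(Q+\tfrac14)\unitmeasure m$ for all $0<t\le r$ and the excess $\int_{\closedball ar\times\grassmannian nm}|x-a|^{-m-2}|S^\perp_\sharp(x-a)|^2\integrald V_{(x,S)}$ is as small as we wish; and every curvature/tilt quantity occurring below lies under $\varepsilon_7^\ast:=\min\{\varepsilon_7(n,m,Q',\tfrac12,L/\sqrt Q):1\le Q'\le Q\}$. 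Corollary~\ref{partition corollary} then yields $\Upsilon$ with $\cardinality\Upsilon\le Q$ (by the construction in its proof $\Upsilon$ consists of limits of values of $\tmap V$, hence $\Upsilon\subset\grassmannian nm$) and partitions $\Pi_k$ of $V_{r_k}$, $r_k=\lambda^kr$, with $\cardinality{\Pi_k'}\le Q$, with $\distance^Y(\tmap V(x),\Upsilon)\le C_1\varepsilon_8(|x-a|/\lambda r)^{\mq}$ for $\mass V$-a.e.\ $x\in\openball a{\lambda r}$, and (from Theorem~\ref{partition theorem}(2)) with $\diam(\support{(\tmap V)_\sharp\mass W})\le\Delta_k$, where $\Delta_1\le C_0\varepsilon_8$ and $\Delta_k\to 0$.

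Next I would prove $\density m{\mass W}a\in\pinteger$ for each $W\in\Pi_1'$. By Lemma~\ref{inherited hypothesis} each $W\in\Pi_1'$ satisfies Hypothesis~\ref{basic hypothesis}, and by Lemma~\ref{support of component contains a} (applied to $V$ at scale $\lambda r$) $a\in\support{\mass W}$; since components of any $\Pi_k$ missing $\openball a{r_k/2}$ carry zero density at $a$, $\sum_{W\in\Pi_k'}\density m{\mass W}a=\density m{\mass V}a=Q$, and by the nesting~\ref{partition theorem}(3) the density of a $W\in\Pi_1'$ at $a$ is the sum of those of its level-$k$ descendants lying in $\Pi_k'$; it thus suffices to treat $W\in\Pi_k'$ for $k$ large. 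For $k$ beyond the stabilisation of $j\mapsto\cardinality{\Pi_j'}$ and with $\Delta_k$ below the separation of $\Upsilon$, the set $\support{(\tmap V)_\sharp\mass W}$---which contains $\tmap V(x)$ for $\mass W$-a.e.\ $x$ and has diameter $\Delta_k$---meets $\Upsilon$ in a single point $P^W_\sharp$; hence, combining with the decay estimate, $|\tmap V(x)-P^W_\sharp|\le C_1\varepsilon_8(|x-a|/\lambda r)^{\mq}$ for $\mass W$-a.e.\ $x$ in some ball $\openball a{\rho_0}$ (its radius may depend on $V$, which is harmless), so Lemma~\ref{unique tangent cone} applied to $W$ with $P=P^W$ gives $\density m{\mass W}a\in\pinteger$. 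Therefore each $W\in\Pi_1'$ has $Q_W:=\density m{\mass W}a\in\pinteger$ with $1\le Q_W\le Q$ and $\sum_{W\in\Pi_1'}Q_W=Q$, and (choosing a descendant $W'\in\Pi_k'$) $\tmap V$ lies within $\Delta_1\le\varepsilon_7^\ast$ of $P_W:=P^{W'}_\sharp\in\support{(\tmap V)_\sharp\mass W}\cap\Upsilon\subset\grassmannian nm$ on $W$.

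Then I would apply Lemma~\ref{lipschitz lemma} to each $W\in\Pi_1'$ with $U=\openball a{\lambda r}$, radius $R=\tfrac34\lambda r$, multiplicity $Q_W$, $\delta=\tfrac12$, Lipschitz bound $L/\sqrt Q$ and plane $P_W$: hypothesis (iii) holds since $|\tmap W(x)-P_W|=|\tmap V(x)-P_W|\le\Delta_1$ $\mass W$-a.e.; hypothesis (ii) holds by H\"older's inequality from hypothesis~(ii) of the theorem and $\gmeancurvature(W)=\gmeancurvature(V)$ $\mass W$-a.e.\ (Lemma~\ref{inherited hypothesis}); and hypothesis (i) follows from Lemma~\ref{monotonicity formula} applied to $W$ at some $\rho_1<\lambda r$ with $\rho_1^{\mq}\totalcurvature W{\closedball a{\rho_1}}q\le\delta'$ small---letting the inner radius tend to $0$ pins $\density m{\mass W}a=Q_W$ and squeezes $t^{-m}\mass W(\closedball at)$ between $e^{-\Lambda\delta'}Q_W\unitmeasure m$ and $e^{\Lambda\delta'}(Q_W\unitmeasure m+\int_{\closedball ar}|x-a|^{-m-2}|S^\perp_\sharp(x-a)|^2\integrald V_{(x,S)})$, both inside $(Q_W\mp\tfrac12)\unitmeasure m$ once $\varepsilon_8$ is small. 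This produces, at $s_W=\tfrac12\lambda_2(m,Q_W,\tfrac12)R$, a set $Z_W\subset\planeopenball{P_W}{a}{s_W}$ and a Lipschitz $u_W\colon Z_W\to\qspace{Q_W}{\real^{n-m}}$ satisfying \ref{lipschitz lemma}(1)--(5).

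Finally I would glue: set $\lambda_3:=\lambda\cdot\min\{1,\tfrac34\min_{1\le Q'\le Q}\lambda_2(m,Q',\tfrac12)\}$---which depends only on $n,m,q,Q$ since $\lambda$ and each $\lambda_2(m,Q',\tfrac12)$ do---and $s=\tfrac12\lambda_3 r$, so $s\le\min_{W\in\Pi_1'}s_W$ and $s\le\tfrac12\lambda r$; put $\Upsilon:=\{P_W:W\in\Pi_1'\}$ (so $\cardinality\Upsilon\le Q$) and, for $P\in\Upsilon$, $Q_P:=\sum_{W:P_W=P}Q_W$ ($\sum_{P\in\Upsilon}Q_P=Q$), $Z_P:=\bigcap_{W:P_W=P}(Z_W\cap\planeopenball{P}{a}{s})$, $u_P:=\sum_{W:P_W=P}u_W$. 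Then $Z_P$ is $\lebesgue m$-measurable with $\lebesgue m(\planeopenball{P}{a}{s}\setminus Z_P)=0$, $u_P\colon Z_P\to\qspace{Q_P}{\real^{n-m}}$ inherits \ref{lipschitz lemma}(4),(5) from the $u_W$, $\lip{u_P}\le L$ because the $\qspace{Q_P}{\real^{n-m}}$-metric of a concatenation is bounded by the square root of the sum of squares of the pieces' metrics while $\cardinality{\{W\in\Pi_1':P_W=P\}}\le Q$, and, using $\openball as\subset\tcylinder{P_W}{a}{s_W}{s_W}$ and that components of $\Pi_1\setminus\Pi_1'$ carry no mass in $\openball as$, $V\restrict\openball as=\sum_{W\in\Pi_1'}W\restrict\openball as=\sum_{W\in\Pi_1'}\graphvarifold{u_W}\restrict\openball as=\sum_{P\in\Upsilon}\graphvarifold{u_P}\restrict\openball as$, which is conclusion (3), read as this sum over $\Upsilon$. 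The main obstacle is the second step: extracting an honest positive-integer density at $a$ for the partition components forces one to localise the decaying tilt estimate of Corollary~\ref{partition corollary} at a scale one cannot bound uniformly---it depends on the separation of $\Upsilon$, hence on $V$---to observe that this costs nothing because Lemma~\ref{unique tangent cone} needs only \emph{some} good scale, and to keep track of how densities at $a$ redistribute along $\{\Pi_k\}$; the remainder is routine bookkeeping with the Monotonicity Formula and with the constant dependencies that keep $\lambda_3$ free of $L$.
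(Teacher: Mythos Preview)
Your proposal is correct and follows the paper's overall architecture: invoke Corollary~\ref{partition corollary} with $f=\tmap V$, show that each $W\in\Pi_1'$ has integer density at $a$, then apply Lemma~\ref{lipschitz lemma} to each component and assemble. Two sub-arguments differ from the paper in ways worth recording.

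First, for the integer density $Q_W=\density m{\mass W}a\in\pinteger$, the paper applies Lemma~\ref{unique tangent cone} directly to each $W\in\Pi_1'$, reading off a single-plane decay estimate $|\tmap W(x)-P_W|\le C_1\varepsilon_8(|x-a|/\lambda r)^{\mq}$ from the (proof of) Corollary~\ref{partition corollary}; this is terse, because the statement of that corollary only gives decay to the set $\Upsilon$, and a fixed $W\in\Pi_1'$ may have several descendants in $\Pi_{k_0}'$ attached to distinct elements of $\Upsilon$. Your route---pass to $\Pi_k'$ past the stabilisation index, where each component carries the genuine single-plane decay, apply Lemma~\ref{unique tangent cone} there, and sum densities back up along the nesting---is more explicit and avoids that reading.

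Second, for the two-sided mass-ratio bound $(Q_W-\tfrac12)\unitmeasure m\le t^{-m}\mass W(\closedball at)\le(Q_W+\tfrac12)\unitmeasure m$ needed in Lemma~\ref{lipschitz lemma}(i), the paper proves the upper bound by a counting argument: assuming some $W$ violates it and comparing $\sum_{W\in\Pi_1'}\mass W$ against $\mass V$. You instead feed the tilt-excess $\int|x-a|^{-m-2}|S^\perp_\sharp(x-a)|^2\,\integrald V$ (small by hypotheses~(i)--(ii) and Lemma~\ref{monotonicity formula}) into the monotonicity inequality for $W$ and squeeze. Both work; yours is shorter and does not require knowing all the $Q_W$ simultaneously, while the paper's avoids invoking the tilt term. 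Either way the remaining verification of Lemma~\ref{lipschitz lemma}(ii)--(iii) and the final gluing over $\Upsilon$ is routine, and your bookkeeping for $\lambda_3$ (taking a minimum over $1\le Q'\le Q$) matches the paper's in spirit.
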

\begin{proof}
Let $\varepsilon_3(m,Q)$, $\varepsilon_4(\frac{n(n+1)}{2},n,m,q,Q)$, $C_1(\frac{n(n+1)}{2},n,m,q,Q)$ be given by Corollary \ref{partition corollary} and $\varepsilon_7(n,m,Q,\frac{1}{2},L)$, $\lambda_2(m,Q,\frac{1}{2})$ be given by Lemma \ref{lipschitz lemma}.
Now choose $\lambda_3'\leq\min\{\lambda_2,\frac{\varepsilon_4}{1+\varepsilon_4}\}$ and 
$\varepsilon_8\leq\min\{\frac{\varepsilon_3}{2},\frac{\varepsilon_4}{m},\frac{1}{4}\}$
 sufficiently small so that $\frac{C_1\varepsilon_8}{{\lambda_3'}^\mq}\leq\varepsilon_7$, 
$e^{\econstant\varepsilon_8}\leq\min\{\frac{Q+\varepsilon_3}{Q+\varepsilon_8},\frac{Q+\frac{3}{2}\varepsilon_8}{Q+\varepsilon_8}\frac{8Q}{8Q-1},\frac{Q}{Q-\frac{3}{8}}\}$
 and $(\frac{\varepsilon_8}{m})^m(Q+\frac{1}{2})^\mq\unitmeasure{m}^\mq\leq\varepsilon_7$.

First we note that the Monotonicity Formula \ref{monotonicity formula} implies that
\begin{equation*}
(Q-\frac{3}{8})\unitmeasure{m}\leq t^{-m}\mass{V}(\closedball{a}{t})\leq (Q+\frac{3}{8})\unitmeasure{m}
\end{equation*}
for all $0<t\leq r$.

We write $r'=\lambda_3' r$ and let $\Pi_1$ be the partition of $V_{r'}$ given by Corollary \ref{partition corollary}.
For each $W\in \Pi_1$ it follows from $\ref{partition corollary}(3)$ that we may find $P_W\in\Upsilon$ so that
\begin{equation*}
|\tmap{W}(x)-P_W|\leq C_1\varepsilon_8\left(\frac{|x-a|}{\lambda_3' r}\right)^{\mq}
\end{equation*}
for $\mass{W}$-almost every $x\in\openball{a}{\lambda_3' r}$.

If $W\in\Pi_1'$, it follows from Lemma \ref{support of component contains a} that $a\in\support{\mass{W}}$.
Therefore, from Lemma \ref{unique tangent cone} we have that $P_W$ is the unique tangent cone of $W$ at $a$, $Q_W=\density{m}{\mass{W}}{a}\in\pinteger$ and $\sum_{W\in\Pi_1'}Q_W=Q$.

Second, we claim that ${r'}^{-m}\mass{W}(\closedball{a}{r'})\leq (Q_W+\frac{1}{2})\unitmeasure{m}$.
In fact, we define
\begin{equation*}
\overline{\Pi_1}=\{W\in\Pi_1':{r'}^{-m}\mass{W}(\closedball{a}{r'})\leq (Q_W+\frac{1}{2})\unitmeasure{m}\}.
\end{equation*}
It follows from the Monotonicity Formula and the choice of $\varepsilon_8$ that $\mass{\overline{W}}(\closedball{a}{r'})\geq (1-\frac{1}{8Q})Q_{\overline{W}}\unitmeasure{m}$ for each $\overline{W}\in\overline{\Pi_1}$.
Suppose by contradiction that $\Pi_1'\setminus\overline{\Pi_1}\neq\emptyset$ and put $\overline{V}=V_{r'}-\sum_{\overline{W}\in\overline{\Pi_1}}\overline{W}$.
On one hand we have that 
\begin{equation*}
{r'}^{-m}\mass{\overline{V}}(\closedball{a}{r'})\leq (Q+\frac{3}{8}-(1-\frac{1}{8Q})\sum_{\overline{W}\in\overline{\Pi_1}}Q_{\overline{W}})\unitmeasure{m}.
\end{equation*}
On the other hand we have
\begin{equation*}
{r'}^{-m}\mass{\overline{V}}(\closedball{a}{r'}) > \sum_{W\in\Pi_1'\setminus\overline{\Pi_1}}(Q_W+\frac{1}{2})\unitmeasure{m}=(Q+\frac{\cardinality{\Pi_1'\setminus\overline{\Pi_1}}}{2}-\sum_{\overline{W}\in\overline{\Pi_1}}Q_{\overline{W}})\unitmeasure{m}.
\end{equation*}
This proves the claim.

Once again, the Monotonicity Formula implies that
\begin{equation*}
t^{-m}\mass{W}(\closedball{a}{t})\geq \frac{(8Q-1)Q_W\unitmeasure{m}}{8Q}\geq (Q_W-\frac{1}{8})\unitmeasure{m}
\end{equation*}
for all $0<t\leq r'$.

Finally, it follows from $(ii)$ and the choice of $\varepsilon_8$, together with Holder inequality, that for each $W\in\Pi_1'$ the conditions of Lemma \ref{lipschitz lemma} are satisfied with respect to $P_W$ and $s'=\frac{\lambda_2 r'}{2}$.
We obtain $Z_W\subset\planeopenball{P_W}{a}{s'}$ and $u_W':Z_W\rightarrow\qspace{Q_W}{\real^{n-m}}$ satisfying \ref{lipschitz lemma}(1)-(5).
We conclude the proof by choosing $\lambda_3=\lambda_2\lambda_3'$.
\end{proof}

\begin{remark}
We observe that the above also proves that we may find $\delta$ sufficently small depending on the tangent cone of $V$ at $a$ so that each $\graphvarifold{u_P}$ is the graph of a function defined over a single plane $P_0$.
In which case $V\restrict\tcylinder{P_0}{a}{\delta}{\delta}$ can be written as the graph of a single $Q$-valued function defined on $P_0$
\end{remark}

\appendix

\section{Counter-example}
The following examples addresses the mistake in the arguments of \cite{hutchinson1986}*{Lemma 3.3}.
More specifically at the end of the proof when it is argued that there must exist at most finitely many planes at which the tangent map of the varifold must accumulate.

\begin{lemma}
Let $\varepsilon>0$ and $f,g:(0,\infty)\rightarrow\real$ be positive functions such that $g$ is non-decreasing and $\lim_{t{\rightarrow 0^+}}f(t)=0$.
There exists a family of sets $S(\rho)\subset\real$ for all $\rho\in(0,\infty)$ satisfying:
\begin{enumerate}[(1)]
\item $S(\tau)\subset S(\rho)\subset \real$ for all $0<\tau\leq\rho$;
\item For all $\rho\in(0,\infty)$ there exist $P,Q\in\real$ such that
\begin{equation*}
S(\rho)\subset\mathbf{B}(P,g(\rho))\cup\mathbf{B}(Q,g(\rho));
\end{equation*}
\item If $A\subset\real$ is such that for all $0<\rho\leq\varepsilon]$ and $P\in S(\rho)$ there exists $Q\in A$ with $|Q-P|\leq f(\rho)$, then $A$ is not a finite set.
\end{enumerate}
\end{lemma}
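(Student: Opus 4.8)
The plan is to realise the $S(\rho)$ as nested tails of a single sequence of points that is revealed along a carefully chosen sequence of scales. I would construct recursively reals $\varepsilon\ge\rho_1>\rho_2>\cdots$ with $\rho_k\to 0$ and points $x_1>x_2>\cdots>0$ with $x_k\to 0$, and set
\begin{equation*}
S(\rho)=\{x_k:\rho_k\le\rho\}\qquad(\rho>0).
\end{equation*}
Property (1) is then immediate: if $\tau\le\rho$ and $\rho_k\le\tau$, then $\rho_k\le\rho$. Property (3) will follow once the recursion is arranged so that the closed intervals $\mathbf{B}(x_k,f(\rho_k))\subset\real$ are pairwise disjoint --- achieved by placing $x_{k+1}$ to the left of $x_k$ at distance exceeding $f(\rho_k)+f(\rho_{k+1})$, so that the intervals are linearly ordered and hence disjoint --- since then, given any $A$ as in (3), the fact that $x_k\in S(\rho_k)$ with $\rho_k\le\rho_1\le\varepsilon$ yields $a_k\in A$ with $a_k\in\mathbf{B}(x_k,f(\rho_k))$, and these $a_k$ are pairwise distinct, so $A$ is infinite.

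Property (2) will follow once the recursion also keeps $0<x_k\le g(\rho_k)$ for every $k$. Indeed, since $(x_k)$ decreases and $g$ is non-decreasing, for each $K$ the tail is contained in $(0,g(\rho_K)]$, so for $\rho\in[\rho_K,\rho_{K-1})$ (with the convention $\rho_0:=\infty$) one has $S(\rho)=\{x_k:k\ge K\}\subset\mathbf{B}(0,g(\rho_K))\subset\mathbf{B}(0,g(\rho))$, a covering by (even just one of) two balls of radius $g(\rho)$; the intervals $[\rho_K,\rho_{K-1})$ exhaust $(0,\infty)$ because $\rho_k\to 0$. The recursive step is then: fix $\rho_1\le\varepsilon$ and $x_1\in(0,g(\rho_1)]$ with $f(\rho_1)<x_1/2$; given $\rho_k,x_k$ with $f(\rho_k)<x_k/2$, choose $\rho_{k+1}\in(0,\rho_k)$ so small that $f(\rho_{k+1})<\tfrac14 g(\rho_{k+1})$ and $f(\rho_{k+1})<\tfrac15\bigl(x_k-f(\rho_k)\bigr)$, and put $x_{k+1}=\tfrac12\min\{x_k-f(\rho_k)-f(\rho_{k+1}),\,g(\rho_{k+1})\}$; a short computation then shows $x_{k+1}\in(0,g(\rho_{k+1})]$, that $\mathbf{B}(x_{k+1},f(\rho_{k+1}))$ lies strictly to the left of $\mathbf{B}(x_k,f(\rho_k))$, and that $f(\rho_{k+1})<x_{k+1}/2$, so the induction closes.

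The only genuinely delicate point --- and the sole place both hypotheses enter --- is securing a scale $\rho_{k+1}\in(0,\rho_k)$ at which $f(\rho_{k+1})$ is simultaneously below a fixed positive fraction of $g(\rho_{k+1})$ and below the fixed positive number $\tfrac15(x_k-f(\rho_k))$. The second requirement is handled by $\lim_{t\to0^+}f(t)=0$ alone; for the first one uses that $g$ is non-decreasing and positive, hence bounded below near $0$ by $g_0:=\lim_{t\to0^+}g(t)$: when $g_0>0$ (the situation in the application) any sufficiently small $\rho_{k+1}$ works since $f\to0$, and otherwise $\rho_{k+1}$ must be chosen with care so that the covering budget $g(\rho_{k+1})$ does not collapse faster than the separation $f(\rho_{k+1})$. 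I expect the verification that enough room always remains inside $(0,g(\rho_{k+1})]$ to deposit the next $f(\rho_{k+1})$-separated point, uniformly along the recursion, to be the main obstacle; the rest is bookkeeping.
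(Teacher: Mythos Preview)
Your skeleton---a decreasing sequence $x_1>x_2>\cdots>0$ revealed at scales $\rho_1>\rho_2>\cdots$ with $S(\rho)=\{x_k:\rho_k\le\rho\}$---is exactly the paper's construction, and your arguments for (1) and (3) are fine. The gap is in (2): you insist on the one-ball containment $x_k\le g(\rho_k)$, which then forces the inductive invariant $f(\rho_{k+1})<\tfrac14 g(\rho_{k+1})$. That inequality need not have any solution at all: take $f(t)=g(t)=t$, which satisfies all the hypotheses (positive, $g$ non-decreasing, $f\to0$), yet $f(\rho)=g(\rho)$ for every $\rho$. So the ``main obstacle'' you flag is not merely delicate---it is genuinely insurmountable under your one-ball scheme, and your parenthetical that $g_0>0$ is ``the situation in the application'' is not a safe assumption for the lemma as stated.

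The remedy is to actually use the second ball the statement grants you, and this is what the paper does. Require only $x_{k+1}\le g(\rho_k)$ (the \emph{previous} scale's budget), not $x_{k+1}\le g(\rho_{k+1})$. Then for $\rho\in[\rho_K,\rho_{K-1})$ one has $S(\rho)=\{x_K\}\cup\{x_k:k>K\}$; the singleton $\{x_K\}$ is covered by $\mathbf{B}(x_K,g(\rho))$ trivially, and for $k>K$ one has $x_k\le x_{K+1}\le g(\rho_K)\le g(\rho)$, so the tail sits in $\mathbf{B}(0,g(\rho))$. The point is that $g(\rho_k)$ is already fixed when you choose $x_{k+1}$ and $\rho_{k+1}$, so the recursion decouples completely from $g$: first pick $x_{k+1}\in(0,\min\{g(\rho_k),\,x_k-f(\rho_k)\})$, then pick $\rho_{k+1}<\min\{\rho_k,1/k\}$ small enough that $f(\rho_{k+1})<\min\{x_{k+1},\,x_k-f(\rho_k)-x_{k+1}\}$, which is possible purely because $f\to0$. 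With this adjustment your proof goes through without further difficulty.
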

\begin{proof}
Let $R_1=1$, $0<P_1<R_1$ arbitrary and $\rho_1<\varepsilon$ sufficiently small so that $f(\rho_1)<\in\{P_1,R_1-P_1\}$.
For $i>1$ we define inductively:
\begin{equation*}
R_{i+1}=P_i-f(\rho_i),\, P_{i+1}<\inf\{g(\rho_i),R_{i+1}\}
\end{equation*}
and $\rho_{i+1}<\inf{\rho_i,\frac{1}{i}}$ sufficiently small so that
\begin{equation*}
f(\rho_{i+1})<\inf\{P_{i+1},R_{i+1}-P_{i+1}\}.
\end{equation*}
Finally, we define 
\begin{equation*}
S(\rho)=\{P\in\real:\text{ for some } i,\, P=P_i\text{ and }\rho_i\leq\rho\}.
\end{equation*}
It is clean that $(1)$ holds.

To prove $(2)$, suppose $0<\rho<\infty$ and let $i=\min\{i:\rho_i\leq\rho\}$.
If $P_j\in S(\rho)$ then either $j=i$, in which case $P_j=P_i$ and $P_j\in\mathbf{B}(P_i,g(\rho))$ is trivial, or $j>i$ and $0<P_j\leq P_{i+1}\leq g(\rho_i)\leq g(\rho)$ hence $P_j\in\mathbf{B}(0,g(\rho))$.

Now, let $A\subset\real$ be as in $(3)$.
For all positive integer $i$ and $P_i\in S(\rho_i)$ let $Q\in A$ be such that $|Q-P_i|\leq f(\rho_i)$.
Therefore,
\begin{equation*}
R_{i+1}\leq P_i-f(\rho_i)\leq Q\leq P_i+f(\rho_i)< R_i.
\end{equation*}
Since $R_i$ is a strictly decreseasing sequence and for all $0<\rho\leq\varepsilon$ there exists a a positive integer $i$ such that $S(\rho_i)\subset S(\rho)$, then $(3)$ is proved.
\end{proof}

It is not difficult to extend this example to planes in Euclidean space, simply choose a one parameter family of planes and apply the above example.

We remark that the conclusion of \cite{hutchinson1986}*{Lemma 3.3} is correct, in fact it is implied by Corollary \ref{partition corollary}.
Therefore there is no counter-example to their result.
The point is that the metric space argument to conclude finiteness is not sufficient.
At this step it remains crucial to utilize the integrability and density conditions.
One could use the above example to construct a varifold that either violates the integrability condition or the density condition so that the set $S_\rho$ in the proof of \cite{hutchinson1986}*{Lemma 3.3} is in fact given by the example above.
For example, in $\real^2$ we can either take a sequence of straight lines with angle in $S(\rho)$ given above, which violates the density condition, or a sine-type function with zeros prescribed by the points of $S(\rho)$, which violates the integrability condition.

\bibliographystyle{plain}
\bibliography{bibliography}

\end{document}